\newtheorem{theorem}{Theorem}[section]
\newtheorem*{theorema}{Theorem A}
\newtheorem*{theoremb}{Theorem B}
\newtheorem*{theoremc}{Theorem C}
\newtheorem{lemma}[theorem]{Lemma}
\newtheorem{proposition}[theorem]{Proposition}
\newtheorem*{definitionnonum}{Definition}
\newtheorem*{remark}{Remark}
\numberwithin{equation}{section}
\def \isnatural {\in\mathbb{N}}
\newcommand{\tef}{transcendental entire function}
\newcommand\qfor{\quad\text{for }}
\newcommand \N{\mathbb{N}}
\newcommand \R{\mathbb{R}}
\newcommand{\hull}{\operatorname{hull}}
\newcommand{\hullo}{\operatorname{inn}}
\def\blfootnote{\xdef\@thefnmark{}\@footnotetext}
\begin{document}
%
%
%
%
\title[The size and topology of quasi-Fatou components]{The size and topology of quasi-Fatou components of quasiregular maps}
\author{Daniel A. Nicks, \, David J. Sixsmith}
\address{School of Mathematical Sciences \\ University of Nottingham \\ 
NG7 2RD \\ UK}
\email{Dan.Nicks@nottingham.ac.uk}
\address{School of Mathematical Sciences \\ University of Nottingham \\ 
NG7 2RD \\ UK}
\email{David.Sixsmith@open.ac.uk}
%
%
%
%
\begin{abstract}
We consider the iteration of quasiregular maps of transcendental type from $\R^d$ to $\R^d$. In particular we study quasi-Fatou components, which are defined as the connected components of the complement of the Julia set.

Many authors have studied the components of the Fatou set of a {\tef}, and our goal in this paper is to generalise some of these results to quasi-Fatou components. First, we study the number of complementary components of quasi-Fatou components, generalising, and slightly strengthening, a result of Kisaka and Shishikura. Second, we study the size of quasi-Fatou components that are bounded and have a bounded complementary component. We obtain results analogous to those of Zheng, and of Bergweiler, Rippon and Stallard. These are obtained using techniques which may be of interest even in the case of {\tef}s.
\end{abstract}
\maketitle
%
%
%
%
\blfootnote{2010 \itshape Mathematics Subject Classification. \normalfont Primary 37F10; Secondary 30C65, 30D05.}
\blfootnote{Both authors were supported by Engineering and Physical Sciences Research Council grant EP/L019841/1.}
\section{Introduction}
In the study of complex dynamics, the first example of a {\tef} with a multiply connected Fatou component was given by Baker \cite{MR0153842}; see the survey \cite{MR1216719} for definitions and further background on complex dynamics. Since Baker's work many authors have studied multiply connected Fatou components; examples include the papers \cite{MR0419759, MR759304, MR3149847, MR3145128, MR2178719}. In this paper our goal is to see how some of these results can be extended to a class of maps on $\R^d$.

First we discuss these results from complex dynamics in more detail. Suppose that $f$ is a {\tef}. If $U_0$ is a Fatou component of $f$, then we adopt the standard notation by letting $U_k$ be the Fatou component of $f$ containing $f^k(U_0)$, for $k\isnatural$. Three important properties of the sequence $(U_k)_{k\isnatural}$ are as follows. The first is a result of Baker \cite[Theorem 3.1]{MR759304}. Here we denote the Euclidean distance from a point $x$ to a set $U \subset \R^d$ by $\operatorname{dist}(x,U) := \inf_{y\in U} |x - y|$.
\begin{theorema}
Suppose that $f$ is a {\tef}, and that $U_0$ is a multiply connected Fatou component of $f$. Then each $U_k$ is bounded and multiply connected, $U_k$ lies in a bounded component of the complement of $U_{k+1}$ for large $k$, and $\operatorname{dist}(0,U_k)\rightarrow\infty$ as $k\rightarrow\infty$.
\end{theorema}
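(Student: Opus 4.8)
The plan is to deduce the theorem from three facts: \emph{(i)} every $U_k$ is multiply connected and carries a distinguished essential curve; \emph{(ii)} $f^k\to\infty$ locally uniformly on $U_0$; and \emph{(iii)} a quantitative ``blow-up'' forcing the bounded complementary components of the $U_k$ to engulf round discs about the origin of radius tending to $\infty$. For \emph{(i)}: since $U_0$ is multiply connected, fix a Jordan curve $\gamma_0\subset U_0$ whose bounded Jordan interior $B_0$ is not contained in $U_0$, and, using $\partial U_0\subseteq J(f)$, pick $a_0\in B_0\cap J(f)$; put $\gamma_k:=f^k(\gamma_0)\subseteq U_k$ and $a_k:=f^k(a_0)\in J(f)$. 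By the argument principle the winding number of $\gamma_k$ about $a_k$ equals the number of solutions of $f^k(z)=a_k$ in $B_0$, counted with multiplicity, hence is at least $1$; since a loop null-homotopic in $U_k$ has winding number $0$ about every point of $\C\setminus U_k$ and $a_k\notin U_k$, the curve $\gamma_k$ is not contractible in $U_k$. Thus every $U_k$ is multiply connected, which is one of the conclusions.

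For \emph{(ii)} I would use normality: since $\{f^k\}$ is normal on $F(f)$, it suffices to exclude a subsequence $f^{k_j}$ converging locally uniformly on $U_0$ to a finite holomorphic limit $g$. If such a limit existed, then $\gamma_{k_j}\to g(\gamma_0)$ in the Hausdorff metric, so the curves $\gamma_{k_j}$ — and with them the points $a_{k_j}$ they surround — would remain in a fixed bounded set, and after passing to a further subsequence $a_{k_j}\to\alpha\in J(f)$. The standard analysis of $g$ then shows $U_0$ is (pre)periodic, eventually mapping onto a periodic Fatou component $V$; this $V$ is neither a Baker domain (there $f^k\to\infty$) nor a Siegel disc (simply connected, against \emph{(i)}), hence an immediate attracting or parabolic basin. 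But \emph{(i)} makes $V$ multiply connected, with an essential curve whose $f^p$-iterates converge to the attracting or parabolic point $\zeta$; this is impossible, since a small loop near $\zeta$ is contractible when $\zeta\in V$, while when $\zeta\in\partial V$ the maximum principle applied on the compact region enclosed by the curve prevents the enclosed complementary component from shrinking to $\zeta$. Hence $f^k\to\infty$ locally uniformly on $U_0$; in particular $U_0$ is a wandering domain and $\min_{z\in\gamma_k}|z|=\min_{z\in\gamma_0}|f^k(z)|\to\infty$.

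For \emph{(iii)}, transcendentality enters through $\log M(r,f)/\log r\to\infty$, where $M(r,f)=\max_{|z|=r}|f(z)|$. The decisive ingredient is a blow-up lemma, due to Baker and later refined by Bergweiler, Rippon and Stallard: there is $R_0$ such that whenever $r>R_0$ and the round annulus $\{\,r<|z-c|<2r\,\}$ lies in $F(f)$, its image under $f$ contains a round annulus $\{\,r'<|w|<R'\,\}$ with both $r'$ and $R'/r'$ as large as one wishes, in terms of $r$. Since a multiply connected domain of large diameter contains a round annulus, which by \emph{(i)} may be taken to separate $a_k$ from $\infty$, feeding this through the lemma inductively yields, for all large $k$, a round annulus $\{\,s_k<|z|<S_k\,\}\subseteq U_k$ centred at the origin with $s_k\to\infty$. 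Using that $U_k$ avoids $J(f)$ and that the point $a_k$ it surrounds lies in $\{|z|\le s_k\}$, one shows that the complementary component of $U_k$ enclosed by this round annulus fills out the disc $\{|z|\le s_k\}$, whence $\operatorname{dist}(0,U_k)\ge s_k\to\infty$. That each $U_k$ is bounded, and that $U_k$ lies in a bounded component of $\C\setminus U_{k+1}$ for large $k$, come out of the same circle of ideas, combining the maximum principle with the blow-up lemma and tracking the outer extent of $U_k$ against the inner radius of the huge annulus produced inside $U_{k+1}$, so that $U_{k+1}$ winds around $U_k$. The step I expect to be the main obstacle is in \emph{(ii)}: excluding the possibility that $U_0$ is preperiodic to a multiply connected immediate attracting or parabolic basin — the case analysis on the limiting periodic component, and the argument that the enclosed hole cannot collapse onto the attracting or parabolic point, are the delicate points; a secondary technical heart is the blow-up lemma of \emph{(iii)}, where the transcendental growth of $f$ does the essential work.
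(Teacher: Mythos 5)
Theorem A is not proved in this paper---it is quoted from Baker---so I am measuring your sketch against the standard argument. Your step \emph{(i)} is correct and is exactly the classical winding-number argument. The decisive gap is in \emph{(ii)}: the assertion that a finite holomorphic limit function $g$ of some subsequence $(f^{k_j})$ on $U_0$ forces $U_0$ to be (pre)periodic is false. If $g$ is non-constant this does follow (all limit functions on a wandering domain are constant), but if $g$ is a finite \emph{constant} it does not: oscillating wandering domains exist (Eremenko--Lyubich, later Bishop in class $\mathcal{B}$), on which one subsequence of iterates tends to a finite constant and another to infinity. So your case analysis of the limiting periodic component $V$ never gets started in precisely the case you must exclude. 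The standard repair avoids the classification altogether: $B_0=\operatorname{int}\gamma_0$ contains the Julia point $a_0$, so by the blowing-up property of $J(f)$ the sets $f^k(B_0)$ eventually cover any fixed compact set disjoint from the (at most one-point) exceptional set; the maximum principle then gives $\max_{\gamma_0}|f^k|\to\infty$, so no subsequence can have a finite limit function on $U_0$, and normality yields $f^k\to\infty$ locally uniformly.

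Step \emph{(iii)} also contains errors. A multiply connected domain of large diameter need \emph{not} contain a round annulus (consider a thin one); the existence of large round annuli inside the $U_k$ is precisely Zheng's theorem (Theorem~C), a strictly deeper result whose proof uses Theorem~A, so invoking it here is circular. Moreover the blow-up lemma as you state it is false: a round annulus $\{r<|z-c|<2r\}$ lying deep inside an attracting basin has a tiny image; the correct versions require the annulus to separate $J(f)$, or a modulus growing with $r$. The same blowing-up property that fixes \emph{(ii)} is what delivers the remaining conclusions: for a suitable fixed $R$ one gets $f^k(B_0)\supset\{|z|\le R\}$ for all large $k$, while $f^k\to\infty$ uniformly on $\gamma_0$ pushes $f^k(\gamma_0)$ out to infinity, so the curve $f^k(\gamma_0)\subset U_k$ winds around the disc $\{|z|\le R\}$ from arbitrarily far away; combined with the maximum principle this is what yields boundedness of $U_k$, the nesting of $U_k$ in a bounded complementary component of $U_{k+1}$, and $\operatorname{dist}(0,U_k)\to\infty$. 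As written, your proposal does not establish these last three conclusions.
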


We identify $\mathbb{C}$ with $\R^2$ in the obvious way. If $U \subset \R^d$ is a domain, then we denote the number of components of $\widehat{\R^d} \setminus U$ by $\operatorname{cc}(U)$, where $\widehat{\R^d}$ is the one-point compactification $\widehat{\R^d} := \R^d \cup \{ \infty\}$. In the plane this quantity is known as the \emph{connectivity} of $U$. Kisaka and Shishikura studied the connectivity of $U_k$, for large values of $k$. Their result \cite[Theorem A]{MR2458806} is as follows. 
\begin{theoremb}
Suppose that $f$ is a {\tef}, and that $U_0$ is a Fatou component of $f$. Then there exists $\nu\in\{1, 2, \infty\}$ such that $\operatorname{cc}(U_k) = \nu$, for all sufficiently large $k$. If $\nu = 1$, then $\operatorname{cc}(U_0) = 1$. If $\nu=2$, then $f : U_k \to U_{k+1}$ is a covering of annuli for all sufficiently large $k$.
\end{theoremb}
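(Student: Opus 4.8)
The plan is to track how the connectivity $\operatorname{cc}(U_k)$ changes under the transition maps $f\colon U_k\to U_{k+1}$ using the Riemann--Hurwitz formula, and then to invoke Theorem~A to control the extremal behaviour. The key input is the classical fact (due to Herring) that if $U_k$ has finite connectivity then $f\colon U_k\to U_{k+1}$ is a proper map of some finite degree $d_k\ge1$; the fact that $f$ may omit a single value of $U_{k+1}$ is easily accommodated. A proper holomorphic map between plane domains of finite connectivity has only finitely many critical points counted with multiplicity; writing $\delta_k\ge0$ for this number, Riemann--Hurwitz reads
\[
\operatorname{cc}(U_k)-2 \;=\; d_k\bigl(\operatorname{cc}(U_{k+1})-2\bigr)+\delta_k .
\]

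From this identity I would extract three facts. First, if $\operatorname{cc}(U_j)<\infty$ then $f\colon U_j\to U_{j+1}$ is proper of finite degree and the formula forces $\operatorname{cc}(U_{j+1})<\infty$ too; hence the set of indices $j$ with $\operatorname{cc}(U_j)=\infty$ is an initial segment of $\N$. Second, if $\operatorname{cc}(U_j)=1$ then the left side of the formula equals $-1$, so $\operatorname{cc}(U_{j+1})=1$. Third, if $\operatorname{cc}(U_j)<\infty$ and $\operatorname{cc}(U_{j+1})\ge2$, then $\operatorname{cc}(U_j)\ge\operatorname{cc}(U_{j+1})$, since $d_j\ge1$ and $\delta_j\ge0$. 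Combining these: if every $U_k$ is infinitely connected then $\nu=\infty$; otherwise $\operatorname{cc}(U_k)$ is finite for all large $k$, and the second and third facts show the sequence either becomes identically $1$ (so $\nu=1$) or is eventually a non-increasing sequence of integers $\ge2$, hence eventually equal to some $\nu\ge2$. For the implication that $\nu=1$ forces $\operatorname{cc}(U_0)=1$ I would argue contrapositively: if $\operatorname{cc}(U_0)\ge2$ then $U_0$ is multiply connected, so by Theorem~A every $U_k$ is multiply connected, and hence $\nu\ge2$.

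It remains to rule out finite $\nu\ge3$ and to treat $\nu=2$. For $k$ large enough that $\operatorname{cc}(U_k)=\operatorname{cc}(U_{k+1})=\nu$, the formula collapses to $(\nu-2)(d_k-1)+\delta_k=0$, so either $\nu=2$, or $\nu\ge3$ with $d_k=1$ and $\delta_k=0$, i.e.\ $f\colon U_k\to U_{k+1}$ is a conformal isomorphism for all large $k$. The whole theorem thus reduces to showing that $d_k\ge2$ for all large $k$ when the $U_k$ are multiply connected, and this is the crux. Here I would use Theorem~A: since $U_{k-1}$ lies in a bounded complementary component of $U_k$ for large $k$, iterating shows that the complementary component $H_k$ of $U_k$ containing $\overline{U_0}$ grows; because $\operatorname{dist}(0,U_k)\to\infty$ while $U_k$ surrounds $H_k$, any fixed disc $\{|z|\le R\}$ lies in $H_k$ for all large $k$. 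Fix a test point $a\in U_0$; its $f$-preimages form an infinite discrete set, so for large $k$ the bounded region cut off by a simple loop in $U_k$ winding once around $H_k$ contains at least two of them, whence by the argument principle the $f$-image of that loop winds at least twice around $a$. A conformal isomorphism $U_k\to U_{k+1}$, however, carries a generator of $\pi_1(U_k)$ to a generator of $\pi_1(U_{k+1})$, which winds once; this contradiction gives $d_k\ge2$. Then $(\nu-2)(d_k-1)=-\delta_k\le0$ forces $\nu=2$, and $\delta_k=0$ means $f\colon U_k\to U_{k+1}$ is an unbranched covering map between (by Theorem~A, bounded doubly connected) domains, i.e.\ a covering of annuli, for all large $k$.

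The step I expect to demand real care is the crux lemma $d_k\ge2$: one must be precise about which complementary component of $U_{k+1}$ is the $f$-image of the distinguished component $H_k$, and about the effect of the single possibly-omitted value of $f|_{U_k}$ on the argument-principle count. By contrast the properness input, the Riemann--Hurwitz bookkeeping, and the two appeals to Theorem~A should all be routine.
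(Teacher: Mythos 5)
Your proposal follows the original Kisaka--Shishikura route (Riemann--Hurwitz bookkeeping plus a degree-$\geq 2$ crux to rule out finite $\nu\geq 3$), which is genuinely different from what this paper does. The paper treats Theorem~B as a quoted result and instead proves a generalisation, Theorem~\ref{theo:boundedhollowQFsimple}, by a purely topological argument: Theorem~\ref{theo:boundedpreimage} shows that for any continuous, open, discrete map a bounded preimage component cannot gain complementary components, and this, combined with the eventual nesting of bounded hollow quasi-Fatou components (Lemma~\ref{Lboundednottopconv}) and an argument with the sets $V_k=\hull(U_k)\setminus\hullo(U_k)$, yields the trichotomy with no recourse to Riemann--Hurwitz. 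The paper's route has the advantage that it applies to quasiregular maps in every dimension (where Riemann--Hurwitz is unavailable) and gives the monotonicity $\operatorname{cc}(U_{k+1})\leq\operatorname{cc}(U_k)$ and persistence of $\nu=\infty$ from $k=0$; your route has the advantage of also delivering the ``covering of annuli'' conclusion, which the paper does not attempt to recover.

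One inaccuracy in your proposal needs repair before the Riemann--Hurwitz machine can run. Properness of $f\colon U_k\to U_{k+1}$ is not what Herring's theorem gives (that gives only $f(U_k)\supset U_{k+1}$ minus at most one point), and properness can genuinely fail when $U_k$ is simply connected and unbounded; so your ``first fact'' and ``second fact'' are not valid as stated. The clean fix is to split at the outset: either every $U_k$ is simply connected, in which case $\nu=1$ and there is nothing further to prove, or some $U_{k_0}$ is multiply connected, in which case Theorem~A makes $U_k$ bounded for all $k\geq k_0$; for a \emph{bounded} Fatou component properness is automatic, since $f(\partial U_k)\subset J(f)$ prevents preimages of compact subsets of $U_{k+1}$ from accumulating on $\partial U_k$. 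Your Riemann--Hurwitz bookkeeping then runs correctly on $k\geq k_0$, which is all the statement requires. Your crux that $d_k\geq 2$ is correct, though the punchline is cleaner stated as: if $f$ were injective on $U_k$ it would send the simple loop $\gamma$ to a Jordan curve, whose winding number about $a$ has modulus at most $1$, contradicting the argument-principle count of at least $2$. You should also fix the order of quantifiers there: first choose $a\in U_0$ avoiding the possible Picard-omitted value, then choose $R$ large enough that $\{|z|<R\}$ contains two preimages of $a$, and only then take $k$ large enough that $\{|z|<R\}\subset H_k$. With these adjustments the argument is sound.
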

Note that the hypotheses of Theorem~B include both simply and multiply connected Fatou components. Returning now specifically to multiply connected Fatou components, Zheng \cite{MR2178719} studied the size of these domains. He showed that, for all sufficiently large $k$, the set $U_k$ contains a ``large'' annulus centred at the origin. Specifically he proved the following, which is a special case of the theorem of \cite{MR2178719}.
\begin{theoremc}
Suppose that $f$ is a {\tef}, and that $U_0$ is a multiply connected Fatou component of $f$. Then there exist sequences of real numbers $(r_k)_{k\isnatural}$ and $(R_k)_{k\isnatural}$ such that:
\begin{enumerate}[(a)]
\item $r_k \rightarrow\infty$ as $k\rightarrow\infty$;
\item $\lim_{k\rightarrow\infty} \frac{R_k}{r_k} = \infty$; and
\item $\{ x : r_k < |x| < R_k \} \subset U_k$, for all sufficiently large $k$.
\end{enumerate}
\end{theoremc}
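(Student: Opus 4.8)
The plan is to push all the topology into Theorem~A and to add a single analytic ingredient recording that $f$ is transcendental, namely Baker's estimate that $\log M(r,f)/\log r\to\infty$, where $M(r,f):=\max_{|z|=r}|f(z)|$. Applying Theorem~A and replacing $U_0$ by $U_N$ for a suitably large $N$, I may assume that every $U_k$ is bounded and multiply connected, that $U_k$ lies in a bounded component of $\C\setminus U_{k+1}$, and that $d_k:=\operatorname{dist}(0,U_k)\to\infty$. Two observations follow at once. First, $f$ has no zeros in any $U_k$: a zero of $f$ in $U_k$ would put $0$ in $U_{k+1}$, contradicting $d_k\to\infty$. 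Second, the filled sets $\hull(U_k)$ increase with $k$, since $U_k\subset\hull(U_{k+1})$ and $\hull$ is monotone and idempotent. From this nesting, together with the positions of the zeros of $f$ relative to the sets $\hull(U_k)$, I would check --- this is essentially part of the proof of Theorem~A --- that for all large $k$ the origin lies in a bounded component of $\C\setminus U_k$, so that $U_k$ contains a closed curve winding once around $0$.

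Conclusion~(a) then comes for free: any round annulus $\{x:r_k<|x|<R_k\}$ contained in $U_k$ has $r_k\ge d_k$, and $d_k\to\infty$. The whole content of the theorem is therefore to produce, for each large $k$, a round annulus $A_k=\{x:r_k<|x|<R_k\}\subset U_k$ centred at the origin with $R_k/r_k\to\infty$; this yields (b) and (c) simultaneously.

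I would argue by induction on $k$. Suppose that for some large $k$ we have such an annulus $A_k=\{r_k<|z|<R_k\}\subset U_k$; after shrinking slightly we may assume $\overline{A_k}\subset U_k$, so $f$ is zero-free on $\overline{A_k}$ and $\log|f|$ is harmonic there. By the maximum principle $\sup_{A_k}|f|=M(R_k,f)$ and $\inf_{A_k}|f|\le M(r_k,f)$. Moreover, the argument principle shows that $f$ wraps each circle $\{|z|=\rho\}$, $r_k\le\rho\le R_k$, a fixed number $\mu\ge1$ of times around $0$ (here one uses that $r_k$ lies past the first zero of $f$), and a standard covering argument then gives $f(A_k)\supset\{z:\inf_{A_k}|f|<|z|<M(R_k,f)\}$. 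Since $f(A_k)\subset U_{k+1}$, this produces the round annulus $A_{k+1}:=\{2M(r_k,f)<|z|<M(R_k,f)/2\}\subset U_{k+1}$. Its inner radius $2M(r_k,f)\to\infty$, and its ratio $M(R_k,f)/\bigl(4M(r_k,f)\bigr)$ tends to infinity: by the three-circles theorem $\log M(r,f)$ is convex in $\log r$ with slope tending to infinity, while $R_k/r_k$ stays bounded away from $1$. The base case is got by thickening, inside some $U_{k_0}$, a curve winding around $0$ into an essential topological annulus of positive modulus and applying $f$ a bounded number of times.

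I expect the main obstacle to be precisely this inductive step --- in particular the assertion that $f(A_k)$ contains the round annulus $\{\inf_{A_k}|f|<|z|<M(R_k,f)\}$, together with the control of $\inf_{A_k}|f|$. One must show that $f$, which may wind $A_k$ many times about the origin and distort it badly, nevertheless spreads it over enough orders of magnitude of $|z|$ that a genuine \emph{round} annulus of large modulus survives inside the image; and one must keep track of the other bounded complementary components of $U_{k+1}$, ensuring that the annulus extracted really lies in $U_{k+1}$ rather than straddling a ``hole''. This is where the location of $U_k$ relative to the zeros of $f$, and the growth of $M(r,f)$, enter in earnest, and where --- as the abstract promises --- the methods are of interest already for {\tef}s. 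The degenerate case in which $f$ has too few zeros for the argument-principle step to apply should be disposed of separately, using known restrictions on which functions admit multiply connected Fatou components.
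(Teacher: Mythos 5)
Your route --- argument principle plus the Hadamard three-circles theorem --- is quite different from the one the paper uses for its generalisation of Theorem~C (Theorem~\ref{theo:curvefambetterDan} and Theorem~\ref{theo:curvefambetter}), which rests on the condenser-capacity inequality (Theorem~\ref{theo:condenser}) and the Teichm\"uller-ring estimates (Lemmas~\ref{Lemma7.34} and~\ref{Theorem7.263}) that work for quasiregular maps in any dimension. But there is a genuine gap in your inductive step, exactly where you suspected. The ``standard covering argument'' does not yield $f(A_k)\supset\{\inf_{A_k}|f|<|z|<M(R_k,f)\}$. For $f$ zero-free on $\overline{A_k}$ with winding number $\mu\geq 1$ about $0$ on the boundary circles, the argument principle gives only
$$
f(A_k)\supset\{w : M(r_k,f)<|w|<m(R_k,f)\}, \quad\text{where}\quad m(R_k,f):=\min_{|z|=R_k}|f(z)|.
$$
Indeed, for such a $w$ the image of the inner circle lies inside $\{|z|<|w|\}$, so winds $0$ times about $w$, while the image of the outer circle lies in $\{|z|>|w|\}$ and, by sliding $0$ to $w$ along a segment that misses the curve, winds $\mu\geq 1$ times about $w$; hence $f-w$ has $\mu$ zeros in $A_k$. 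Once $|w|>m(R_k,f)$ this breaks down: the outer image curve need not separate $w$ from $\infty$, and $f-w$ may have no zeros in $A_k$ at all. So what you can legitimately extract is the annulus $\{2M(r_k,f)<|z|<m(R_k,f)/2\}$, not $\{2M(r_k,f)<|z|<M(R_k,f)/2\}$.

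This is not a cosmetic correction. The minimum modulus $m(R_k,f)$ can be drastically smaller than $M(R_k,f)$ for a transcendental entire function, so the putative annulus may be empty and the induction fails to close; moreover, the three-circles theorem controls $M$ alone and says nothing about $m$, so your convexity step cannot rescue the ratio $m(R_k,f)/M(r_k,f)$. Showing that, on circles meeting a multiply connected Fatou component, $m(r,f)$ remains comparable to $M(r,f)$ in a suitable logarithmic sense is precisely the hard core of Zheng's theorem and of its strengthening in \cite{MR3149847}. Zheng handles it via the hyperbolic metric; this paper handles the quasiregular analogue via the capacity of the ring $(\hull(U_k),\hullo(U_k))$, exploiting the fact that the multiplicity of $f^k$ on the inner hole grows without bound because $f$ is transcendental. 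Your proposal correctly identifies the crux of the problem, but it does not supply the estimate that closes the gap, and the mechanism you propose (three-circles) is not capable of doing so.
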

Zheng's result was significantly strengthened in \cite{MR3149847}, where, amongst other things, it was shown that (b) above could be replaced by $$\liminf_{k\rightarrow\infty} \frac{\log R_k}{\log r_k} > 1.$$ 

With the aim of extending the above theory to more than two (real) dimensions, we now suppose that $d \geq 2$, and that $f : \R^d \to \R^d$ is a quasiregular map of transcendental type. We defer a full definition of a quasiregular map to Section~\ref{Sdefs}; for now we note that these maps are the natural generalization to higher dimensions of analytic maps of the plane. A quasiregular map is said to be of \emph{transcendental type} if it has an essential singularity at infinity. 

We need a different definition of the Julia set to that used in complex dynamics, so, following \cite{MR3009101, MR3265283}, we define the \emph{Julia set $J(f)$} to be the set of all $x \in \R^d$ such that
\begin{equation}
\label{Juliadef}
\operatorname{cap} \left(\R^d\backslash \bigcup_{k=1}^\infty f^k(U)\right) = 0,
\end{equation}
for every neighbourhood $U$ of $x$. We define the notation used in (\ref{Juliadef}) in Section~\ref{Sdefs}; for now we note that if $S\subset \R^d$ and cap$(S)=0$, then $S$ is, in a precise sense, a small set. A set $E\subset\R^d$ is said to be \emph{completely invariant} if $x \in E$ implies that $f(x) \in E$ and $f^{-1}(x) \subset E$. It follows from the definition that the Julia set $J(f)$ is closed and completely invariant. If $f$ is a quasiregular map of transcendental type, then  by \cite[Theorem~1.1]{MR3265283} we have card$(J(f)) =\infty$. Note that it follows from \cite[Theorem~1.2]{MR583633}, which is the quasiregular analogue of Picard's great theorem, that $J(f)$ is unbounded.

Following \cite{SixsmithNicks1, SixsmithNicks2}, we define the \emph{quasi-Fatou set $QF(f)$} of $f$ as the complement in $\R^d$ of the Julia set $J(f)$. Thus the quasi-Fatou set is a completely invariant open set which, if non-empty, has the Julia set as its boundary. Note that, by \cite[Theorem~1.2]{MR3265283}, if $f$ is a {\tef}, then $QF(f)$ is equal to the usual Fatou set. We stress that there are no assumptions about the normality of the family of iterates $(f^k)_{k\geq 0}$ in a \emph{quasi-Fatou component}, which is a connected component of the quasi-Fatou set. We retain the convention that if $U_0$ is a quasi-Fatou component of $f$, then $U_k$ is the quasi-Fatou component of $f$ containing $f^k(U_0)$, for $k\isnatural$.

If $G \subset\R^d$ is a domain with a bounded complementary component, then we say that $G$ is \emph{hollow}; otherwise we say that it is \emph{full}. In \cite[Theorem 1.3(a)]{SixsmithNicks1} it was shown that the conclusions of Theorem~A all hold, with ``multiply connected'' interpreted as ``hollow'', if $f$ is a quasiregular map of transcendental type, and $U_0$ is a quasi-Fatou component of $f$ which is bounded and hollow; see Lemma~\ref{Lboundednottopconv} below. We extend this study by giving results analogous to Theorem~B and Theorem~C.

In fact we are able to give an almost complete generalisation of Theorem~B.
\begin{theorem}
\label{theo:boundedhollowQFsimple}
Suppose that $f : \R^d \to \R^d$ is a quasiregular map of transcendental type, and that $U_0$ is a quasi-Fatou component of $f$. Then $\operatorname{cc}(U_{k})\leq \operatorname{cc}(U_{k-1})$, for $k\isnatural$. If $\operatorname{cc}(U_0) \in \{1, \infty\}$, then $\operatorname{cc}(U_k)=\operatorname{cc}(U_0)$, for $k\isnatural.$ On the other hand, if $1 < \operatorname{cc}(U_0) < \infty$, then $\operatorname{cc}(U_k)=2$, for all sufficiently large values of $k$.
\end{theorem}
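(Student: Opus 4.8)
\textit{Proof strategy.} The plan is to follow the scheme by which Kisaka and Shishikura prove Theorem~B, substituting for the complex-analytic input the basic topological properties of quasiregular maps --- a non-constant quasiregular map is open and discrete, and a proper quasiregular map between domains is a branched covering of some finite degree --- and substituting Lemma~\ref{Lboundednottopconv} for Baker's Theorem~A. The preliminary step is to describe the behaviour of $f$ from one component to the next. Since $\partial U_{k-1}\subseteq J(f)$ and $J(f)$ is completely invariant, $f(\partial U_{k-1})\subseteq J(f)$, which is disjoint from the quasi-Fatou component $U_k$; combined with $f(U_{k-1})\subseteq U_k$ and the general fact that $\partial f^{-1}(U_k)\subseteq f^{-1}(\partial U_k)$, this shows that $U_{k-1}$ is a connected component of $f^{-1}(U_k)$. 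Moreover $f(U_{k-1})$ is an open connected set whose boundary lies in $J(f)$, so it fills up $U_k$ apart from a set of capacity zero. When the $U_k$ are bounded --- which in the situations below follows from Lemma~\ref{Lboundednottopconv} --- the same boundary observation shows that $f\colon U_{k-1}\to U_k$ is proper, hence a branched covering of some finite degree $m_k$.

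Granting this, monotonicity is obtained in two steps. First, restoring the capacity-zero (hence totally disconnected) set $U_k\setminus f(U_{k-1})$ to the complement cannot merge distinct complementary components, so $\operatorname{cc}(U_k)\le\operatorname{cc}(f(U_{k-1}))$. Second, one shows that a finite branched covering does not decrease connectivity: $\operatorname{cc}(f(U_{k-1}))\le\operatorname{cc}(U_{k-1})$. In the plane this is the Riemann--Hurwitz inequality; in $\R^d$ I would deduce it from the transfer (Gysin) homomorphism $\tau$ of a proper degree-$m$ discrete map, for which $\tau\circ f^{\ast}=m\cdot\operatorname{id}$ on rational cohomology, so that $f^{\ast}$ is injective on $H^{d-1}$; since Alexander duality in $\widehat{\R^d}$ identifies $\operatorname{cc}(V)-1$ with the $(d-1)$-st Betti number of $V$, the inequality follows. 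Taken together these give $\operatorname{cc}(U_k)\le\operatorname{cc}(U_{k-1})$, and the case $\operatorname{cc}(U_0)=1$ is then immediate, since $1\le\operatorname{cc}(U_k)\le\dots\le\operatorname{cc}(U_0)=1$ forces equality.

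For the two remaining cases more is needed from this picture. If $\operatorname{cc}(U_0)=\infty$, I argue by contradiction: were $\operatorname{cc}(U_j)$ finite for some $j\ge 1$, then $f\colon U_{j-1}\to U_j$ would be a proper finite branched covering onto (a full-capacity subset of) a finitely connected domain, and such a covering cannot have an infinitely connected domain space; so $\operatorname{cc}(U_{j-1})$ is finite, a contradiction. If $2\le\operatorname{cc}(U_0)<\infty$, then Lemma~\ref{Lboundednottopconv} keeps every $U_k$ bounded and hollow, so $\operatorname{cc}(U_k)\ge 2$ for all $k$; with the monotonicity, $\operatorname{cc}(U_k)$ is non-increasing and bounded below by $2$, hence equals a fixed value $\nu\ge 2$ for all large $k$. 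For such $k$, $f\colon U_k\to U_{k+1}$ is a proper branched covering of some degree $m$ between bounded $\nu$-connected domains; a Riemann--Hurwitz count in the plane, and in general an analysis of the honest degree-$m$ covering obtained by deleting the image of the branch set (which has topological dimension at most $d-2$), forces $\nu=2$ and, further, that there is no branching over $U_{k+1}$, so that $f\colon U_k\to U_{k+1}$ is eventually an unbranched covering, completing the proof.

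The chief obstacle is this last step: in dimension $d\ge 3$ there is no Riemann--Hurwitz formula, and one must control the interaction between the (possibly intricate) branch set of a proper quasiregular map and the count of complementary components --- in particular ruling out that this count stays above $2$ for ever and that a finite branched covering can produce infinite connectivity. A related difficulty, already present in setting up the branched-covering picture, is the properness of $f\colon U_k\to U_{k+1}$, which relies on boundedness of the $U_k$; this needs separate care whenever a relevant component is not known a priori to be bounded. Finally, the capacity-zero exhaustion $\operatorname{cap}(U_k\setminus f(U_{k-1}))=0$ used throughout should be isolated as a lemma.
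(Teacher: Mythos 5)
There is a genuine gap, and it sits exactly where you flag your ``chief obstacle'': the case $1<\operatorname{cc}(U_0)<\infty$. Your plan reduces this case to showing that an eventually constant connectivity $\nu\ge 2$ must equal $2$, and for that you invoke Riemann--Hurwitz in the plane and, for $d\ge 3$, an unspecified ``analysis of the honest covering off the branch set''. No such analysis is supplied, and none is known to substitute for Riemann--Hurwitz in higher dimensions; as written, the central case of the theorem is not proved. The paper's route here is entirely different and is dynamical rather than covering-theoretic: setting $V_k=\hull(U_k)\setminus\hullo(U_k)$, either $f(V_k)\subset V_{k+1}$ for all large $k$ --- in which case the forward orbit of the open connected set $V_k$ omits the set $\hullo(U_{k})$, which has positive capacity, so by the very definition \eqref{Juliadef} of $J(f)$ one gets $V_k\subset QF(f)$, hence $V_k=U_k$ and $\operatorname{cc}(U_k)=2$ --- or else $f(V_k)\not\subset V_{k+1}$ infinitely often, and each such $k$ forces two distinct bounded complementary components of $U_k$ to have the same image, so $\operatorname{cc}(U_{k+1})<\operatorname{cc}(U_k)$ strictly, which is impossible infinitely often when $\operatorname{cc}(U_0)<\infty$. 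This use of the capacity definition of the Julia set is the idea your outline is missing. A second, smaller, unfilled point is your assertion that a proper finite branched covering onto a finitely connected domain cannot have infinitely connected source: your transfer-homomorphism argument gives the inequality in the wrong direction for this, whereas the paper derives it from discreteness (infinitely many bounded complementary components of $U_{k-1}$ would map onto one component $V'$ of the complement of $U_k$, and then $f^{-1}(y)\cap\hull(U_{k-1})$ would be an infinite bounded set for $y\in V'$). You also leave the unbounded hollow case as ``needing care''; the paper settles it with Lemma~\ref{Lunboundednottopconv}, which gives complete invariance (so $U_k=U_0$) and infinitely many bounded complementary components.

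Where your proposal does overlap with a complete argument --- the monotonicity $\operatorname{cc}(U_k)\le\operatorname{cc}(U_{k-1})$ and the case $\operatorname{cc}(U_0)=1$ --- your route is genuinely different from the paper's. You propose Alexander duality plus a transfer homomorphism for the proper map $f\colon U_{k-1}\to U_k$; the paper instead proves (Theorem~\ref{theo:boundedpreimage}, via Rickman's result on normal domains, Proposition~\ref{prop:Rickman}) that $f$ maps each bounded complementary component of $U_{k-1}$ \emph{onto} a bounded complementary component of $U_k$ and that this induced map is surjective, which gives monotonicity by counting. The paper's argument is more elementary and avoids any cohomological machinery; it also yields for free the exact surjectivity statements $f(U_{k-1})=U_k$ and $f(\hull(U_{k-1}))=\hull(U_k)$ (so the ``up to a capacity-zero set'' hedge in your first step is unnecessary when $U_{k-1}$ is bounded), which are then reused in the dynamical argument above.
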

\begin{remark}\normalfont
We note that this result is, in fact, slightly stronger than Theorem~B (apart from the final conclusion about a ``covering of annuli''). We observe also that Kisaka and Shishikura used the Riemann-Hurwitz formula in the proof of their result. Our proof is, in fact, almost entirely topological, and so gives an alternative route to most of Theorem B.
\end{remark}
There is no example of a quasiregular map of transcendental type with a quasi-Fatou component $U$ that is known to be unbounded and hollow. However, it is known \cite[Theorem 1.4]{SixsmithNicks1} that in this case $U$ would be completely invariant and would have no unbounded complementary components. In the remainder of this paper we study quasiregular maps of $\R^d$, of transcendental type, with a bounded hollow quasi-Fatou component, and give results analogous to Theorem~C. In order to give a full statement of our results, we require a number of preliminary definitions. Note that all boundaries and closures in this paper are taken in $\R^d$.
\begin{definitionnonum}\normalfont
Suppose that $U \subset \R^d$ is a bounded domain. 
\begin{itemize}
\item We let $\hull(U)$ denote the \emph{topological hull} of the domain, in other words the union of $U$ with its bounded complementary components.
\item We let $\partial_{out} U$ denote the \emph{outer boundary} of $U$, in other words $\partial_{out} U :=~\partial \hull(U)$.
\item If the origin lies in a bounded complementary component of $U$, then we let $\hullo(U)$ denote this \emph{inner complementary component}.
\item If the origin lies in a bounded complementary component of $U$, then we let $\partial_{inn} U$ denote the \emph{inner boundary} of $U$, in other words $\partial_{inn} U :=~\partial \hullo(U)$.
\end{itemize}
\end{definitionnonum}
We note that, in the last two definitions, we are not claiming a special role for the origin, the choice of which is both convenient and standard (see, for example, \cite[p.1265]{MR3149847}). Indeed our results are unchanged if, in these definitions, the origin is replaced by some other point in $\R^d$.

Our next result gives a precise statement of the idea that the inner and outer boundaries of bounded hollow quasi-Fatou components are, after sufficiently many iterates, always ``far apart''.
\begin{theorem}
\label{theo:curvefambetterDan}
Suppose that $f : \R^d \to \R^d$ is a quasiregular map of transcendental type, and that $U_0$ is a bounded hollow quasi-Fatou component of $f$. Then the origin lies in a bounded complementary component of $U_k$, for all large $k$, and
$$
\lim_{k\rightarrow\infty} \frac{1}{k} \log \log \frac{\inf \{ |x| : x \in \partial_{out} U_k \}}{\sup \{|x| : x \in \partial_{inn} U_k\}} = \infty.
$$
\end{theorem}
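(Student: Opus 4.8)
The plan is to reduce the theorem to the construction, inside the components $U_k$, of round annuli about the origin whose moduli grow super-exponentially. Write
\[
s_k := \sup\{|x| : x\in\partial_{inn} U_k\}, \qquad t_k := \inf\{|x| : x\in\partial_{out} U_k\},
\]
and, replacing the origin by another point if necessary --- which is permitted by the remark following the definitions, and which lets us assume in addition that the origin is not exceptional for the dynamics (so that $f$ does not omit it) --- I would aim to prove that for all large $k$ the component $U_k$ contains a round annulus $A_k=\{a_k<|x|<b_k\}$ surrounding the origin, with $a_k\to\infty$ and $\tfrac1k\log\log(b_k/a_k)\to\infty$. This suffices: an annulus $A_k\subseteq U_k$ surrounding the origin forces the origin into a bounded complementary component of $U_k$ --- the first assertion --- while, since $A_k\subseteq\hull(U_k)$ and $A_k$ is disjoint both from $U_k$ and from $U_k$'s inner complementary component, one reads off $\hullo(U_k)\subseteq\overline{B(0,a_k)}$ and $B(0,b_k)\subseteq\hull(U_k)$, so that $t_k/s_k\ge b_k/a_k$ and the limit follows. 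Throughout I would use Lemma~\ref{Lboundednottopconv}: the $U_k$ are eventually bounded and hollow, the hulls $\hull(U_k)$ are eventually nested increasing, and $\operatorname{dist}(0,U_k)\to\infty$ --- the last forcing $a_k\to\infty$ automatically, as $U_k$ eventually avoids every fixed ball about the origin.

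For the getting-started step I would show that $U_k$ surrounds the origin for some large $k$ --- hence, by nestedness of the hulls, for all large $k$ --- and already contains a (possibly thin) round annulus about it. This adapts Baker's classical argument: since $U_0$ is hollow there is a closed curve --- in higher dimensions, a compact set separating a point from infinity --- inside $U_0$ enclosing a point $p_0$ in the boundary of a bounded complementary component of $U_0$, so $p_0\in J(f)$, which we may take non-exceptional. Pushing this set forward by $f^k$ and using that quasiregular maps have everywhere positive local index --- so that the image set has linking number at least $1$ about every point of its interior image --- one gets an enclosing set in $U_k$; the blow-up property of $J(f)$ at $p_0$, together with Rickman's quasiregular analogue of Picard's theorem, shows that the forward orbit of a small ball about $p_0$ captures a preimage of the origin, so that, pushing forward far enough, $U_k$ encloses the origin for all large $k$.

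The main step is the iteration. Assuming $U_k$ contains a round annulus $\{a_k<|x|<b_k\}$ surrounding the origin, with $a_k$ large and with modulus already large enough for the estimate below, I would produce such an annulus in $U_{k+1}$ of much larger modulus. For $a_k\le\alpha<\beta\le b_k$, a topological-degree argument --- the quasiregular substitute for the argument principle --- shows that $f(\{\alpha<|x|<\beta\})$ contains the round annulus $\{M(\alpha,f)<|x|<m(\beta,f)\}$, where $M(r,f)=\max_{|x|=r}|f(x)|$ and $m(r,f)=\min_{|x|=r}|f(x)|$: on $|x|=\alpha$ the map has modulus at most $M(\alpha,f)$, so its degree over a value of larger modulus in $B(0,\alpha)$ vanishes, while on $|x|=\beta$ it has modulus at least $m(\beta,f)$, so its degree over a value of smaller modulus in $B(0,\beta)$ equals the number of zeros of $f$ in $B(0,\beta)$, which is positive as $a_k$ is large; the difference forces the required preimages in the subannulus. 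As this annulus lies in $f(U_k)\subseteq U_{k+1}$, we get $a_{k+1}\le M(\alpha,f)$ and $b_{k+1}\ge m(\beta,f)$. Taking $\alpha$ just above $a_k$, choosing $\beta$ by a minimum-modulus estimate for quasiregular maps of transcendental type --- a substitute for the $\cos\pi\rho$-theorem, providing $\beta$ close to $b_k$ with $\log m(\beta,f)$ at least a fixed positive multiple of $\log M(\beta,f)$ --- and invoking that a transcendental-type map grows faster than every polynomial, so that $\log M(r,f)/\log r\to\infty$, one obtains $\log(b_{k+1}/a_{k+1})\ge\Theta_k\log(b_k/a_k)$ for some $\Theta_k\to\infty$; crucially it is the unboundedness of this ``gain factor'', flowing from the super-polynomial growth of $M$, that matters, a bounded factor being insufficient. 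Hence $\log\log(b_{k+1}/a_{k+1})-\log\log(b_k/a_k)\to\infty$, which gives the displayed limit once the iteration is primed with an annulus of sufficiently large modulus.

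I expect the principal obstacle to be the analytic heart of the iteration: the minimum-modulus estimate for quasiregular maps and the bookkeeping needed to show the annuli produced are, and remain, fat enough --- one must both get the iteration started from a genuinely wide annulus (the analogue of the content of Zheng's and of Bergweiler--Rippon--Stallard's work in the plane) and keep $\Theta_k$ truly unbounded. In the quasiregular setting there is neither an argument principle nor a $\cos\pi\rho$-theorem, so both must be replaced by genuinely $d$-dimensional tools: topological degree theory, and quasiregular growth and distortion estimates. Finally, one must take care that the annuli lie inside $U_k$ itself rather than merely inside $\hull(U_k)$, and that their images really do contain round annuli inside $U_{k+1}$; this, like the getting-started step, is where the hollowness of $U_0$ and the fact that the origin is eventually surrounded enter essentially.
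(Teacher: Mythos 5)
Your proposal takes a genuinely different route from the paper, and it has two gaps, the first of which is fatal to the argument as written.

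First, you reduce the theorem to constructing, inside $U_k$ itself, round annuli of super-exponentially growing modulus surrounding the origin. This is strictly stronger than what the theorem asserts: the theorem only says that the spheres carrying $\partial_{\mathrm{inn}} U_k$ and $\partial_{\mathrm{out}} U_k$ have a large ratio of radii, and says nothing about $U_k$ containing a round annulus. Indeed the paper emphasises after the statement that when $\operatorname{cc}(U_0)=\infty$ the theorem ``gives no information about the size'' of $U_k$ because of the ``infinitely many holes between the inner and outer boundaries.'' Your degree argument applies $f$ to round annuli $\{\alpha<|x|<\beta\}$ that you assume lie in $U_k$, but when $\operatorname{cc}(U_k)=\infty$ the complementary components of $U_k$ may pierce any candidate annulus, and you have no mechanism for clearing them out. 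Passing to $\hull(U_k)\setminus\hullo(U_k)$ instead does not help either, because the image of this region under $f$ need not avoid $\hullo(U_{k+1})$: a bounded complementary component $Y\neq\hullo(U_k)$ of $U_k$ can map onto $\hullo(U_{k+1})$ (this possibility is handled explicitly in the proof of Lemma~\ref{lemm:boundedhollowQF}).

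Second, even were the round-annulus framework salvageable, the inductive step hinges on a minimum-modulus estimate for quasiregular maps of transcendental type — a $\cos\pi\rho$-type theorem ensuring $\log m(\beta,f)$ is a fixed positive fraction of $\log M(\beta,f)$ on a suitable set of radii. You flag this yourself as the ``principal obstacle,'' and rightly so: no such result is established in the quasiregular literature, and the paper does not prove or use one. The paper's proof circumvents both issues entirely by working with the capacity of the ringlike condenser $(\hull(U_k),\hullo(U_k))$. The upper bound on capacity comes from Theorem~\ref{theo:condenser} together with the fact that $N(f,D)\geq p$ on any domain $D$ containing a large ball (a consequence of transcendental type, requiring no minimum-modulus input), which forces $\operatorname{cap}(\hull(U_k),\hullo(U_k))\leq L^{-k}$. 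The lower bound comes from the Teichm\"uller-condenser estimate (Lemmas~\ref{Lemma7.34} and~\ref{Theorem7.263}), which converts a small capacity directly into a large ratio $t_k/s_k$. This capacity argument is insensitive to the topology of $U_k$ between the two boundaries, which is precisely why it handles the $\operatorname{cc}(U_k)=\infty$ case that your approach cannot.

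Your degree computation and the observation that transcendental type gives $\log M(r,f)/\log r\to\infty$ are sound ingredients in other contexts, but here they do not close the argument. To repair the proof you would need either to restrict to the finite-connectivity case (which is the content of Theorem~\ref{theo:curvefambetter}, itself deduced from Theorem~\ref{theo:curvefambetterDan}), or to replace the round-annulus construction and minimum-modulus estimate with a modulus/capacity argument of the kind used in the paper.
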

We obtain the following corollary of Theorem~\ref{theo:curvefambetterDan}. This is a generalisation and strengthening of Theorem~C, in the case of hollow quasi-Fatou components with \emph{finitely} many complementary components.
\begin{theorem}
\label{theo:curvefambetter}
Suppose that $f : \R^d \to \R^d$ is a quasiregular map of transcendental type, and that $U_0$ is a hollow quasi-Fatou component of $f$ with finitely many complementary components. Then there exist sequences of real numbers $(r_k)_{k\isnatural}$ and $(R_k)_{k\isnatural}$ such that:
\begin{enumerate}[(a)]
\item $r_k \rightarrow\infty$ as $k\rightarrow\infty$;\label{theointoinf}
\item $\lim_{k\rightarrow\infty} \frac{1}{k} \log \log \frac{R_k}{r_k} = \infty$;\label{theofracsgrow} and
\item $\{ x : r_k < |x| < R_k \} \subset U_k$, for all sufficiently large $k$.
\end{enumerate}
\end{theorem}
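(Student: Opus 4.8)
The plan is to deduce Theorem~\ref{theo:curvefambetter} from Theorem~\ref{theo:curvefambetterDan}, using Theorem~\ref{theo:boundedhollowQFsimple} to control the connectivity of the iterates $U_k$ and Lemma~\ref{Lboundednottopconv} to control their size and position. The first step is to reduce to the case where $U_0$ is bounded. Indeed, if $U_0$ were unbounded then, by \cite[Theorem~1.4]{SixsmithNicks1}, it would have no unbounded complementary component; since by hypothesis it has only finitely many complementary components, $\R^d\setminus U_0$ would then be bounded, and hence $J(f)\subseteq\R^d\setminus U_0$ would be bounded, contradicting the unboundedness of $J(f)$ recorded in the introduction. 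So $U_0$ is a bounded hollow quasi-Fatou component, and Lemma~\ref{Lboundednottopconv} applies: each $U_k$ is bounded and hollow, and $\operatorname{dist}(0,U_k)\to\infty$ as $k\to\infty$.

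Next I would determine the topology of $U_k$ for all large $k$. Since $U_0$ is hollow, $\operatorname{cc}(U_0)\ge 2$, while by hypothesis $\operatorname{cc}(U_0)<\infty$; hence Theorem~\ref{theo:boundedhollowQFsimple} gives $\operatorname{cc}(U_k)=2$ for all large $k$. For such $k$, the bounded set $U_k$ therefore has exactly one bounded complementary component, which by the part of Theorem~\ref{theo:curvefambetterDan} placing the origin in a bounded complementary component of $U_k$ must be $\hullo(U_k)$; consequently $\R^d\setminus U_k$ is the union of the compact set $\hullo(U_k)$ and the closed, unbounded set $\R^d\setminus\hull(U_k)$. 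Theorem~\ref{theo:curvefambetterDan} further gives, as $k\to\infty$,
$$
\frac{1}{k}\log\log\frac{\inf\{|x|:x\in\partial_{out} U_k\}}{\sup\{|x|:x\in\partial_{inn} U_k\}}\to\infty .
$$

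I would then set, for all large $k$,
$$
r_k:=\sup\{|x|:x\in\partial_{inn} U_k\},\qquad R_k:=\inf\{|x|:x\in\partial_{out} U_k\},
$$
and define $r_k,R_k$ arbitrarily for the remaining finitely many $k$ (which affects none of (a)--(c)). Since $\partial_{inn} U_k=\partial\hullo(U_k)\subseteq\overline{U_k}$, every point of $\partial_{inn} U_k$ has modulus at least $\operatorname{dist}(0,U_k)$, so $r_k\ge\operatorname{dist}(0,U_k)\to\infty$, which is (a); and (b) follows at once from the displayed limit, the quotient there being exactly $R_k/r_k$ for all large $k$. For (c), fix a large $k$ and a point $x$ with $r_k<|x|<R_k$, and suppose, for a contradiction, that $x\notin U_k$. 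If $x\in\hullo(U_k)$ then, as the maximum of $|\cdot|$ over the compact set $\hullo(U_k)$ is attained on $\partial\hullo(U_k)=\partial_{inn} U_k$, we obtain $|x|\le r_k$, a contradiction. Otherwise $x\in\R^d\setminus\hull(U_k)$, and then the segment from $0\in\hullo(U_k)\subseteq\hull(U_k)$ to $x$ leaves the open set $\hull(U_k)$ and so meets $\partial\hull(U_k)=\partial_{out} U_k$ at a point $y$; but $y=tx$ for some $t\in[0,1]$, so $R_k\le|y|\le|x|<R_k$, again a contradiction. Hence $x\in U_k$, establishing (c).

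The step I expect to be the main obstacle is (c), and specifically the need to rule out complementary components of $U_k$ other than $\hullo(U_k)$ and the unbounded one: without this, the round annulus $\{x:r_k<|x|<R_k\}$ could meet $\R^d\setminus U_k$ even though the inner and outer boundaries of $U_k$ are very far apart, so it is essential that Theorem~\ref{theo:boundedhollowQFsimple} yields $\operatorname{cc}(U_k)=2$. The only other point requiring care is the reduction, in the first step, to the case of a bounded $U_0$.
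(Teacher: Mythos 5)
Your proposal is correct and follows essentially the same route as the paper: reduce to bounded $U_0$ via Lemma~\ref{Lunboundednottopconv} and the unboundedness of $J(f)$, take $r_k$ and $R_k$ to be the inner and outer boundary moduli, and combine Lemma~\ref{Lboundednottopconv}, Theorem~\ref{theo:curvefambetterDan} and the connectivity conclusion $\operatorname{cc}(U_k)=2$ from Theorem~\ref{theo:boundedhollowQFsimple}. Your verification of (c) merely spells out in more detail a step the paper leaves to the reader.
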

For a bounded hollow quasi-Fatou component with \emph{infinitely} many complementary components, Theorem~\ref{theo:curvefambetterDan} gives no information about the size of the quasi-Fatou components, in view of the infinitely many ``holes'' between the inner and outer boundaries. Our final result shows that, in a precise sense, the quasi-Fatou components are eventually of very large measure. Here if $G \subset \R^d$ is a domain, then we denote the $d$-dimensional Lebesgue measure of $G$ by meas$(G)$. Also, if $f : \R^d \to \R^d$ is quasiregular, then we define the \emph{maximum modulus function} by $$M(R,f) := \max_{|x| = R} |f(x)|, \qfor R>0.$$ We also let $M^k(R, f)$ denote the $k$th iterate of $M(R, f)$ with respect to the first variable, in other words $M^k (R,f) = M(M^{k-1}(R,f),f)$, for $k > 1$.
\begin{theorem}
\label{theo:measureofboundedhollowQF}
Suppose that $f : \R^d \to \R^d$ is a quasiregular map of transcendental type, that $U_0$ is a bounded hollow quasi-Fatou component of $f$, and that $R>0$. Then there exist $k_0, \ \ell \in \N$ and $\alpha > 0$ such that
\begin{equation}
\label{eq:tnewer}
\operatorname{meas}(U_{k+\ell}) \geq \alpha^k \left(M^k(R, f)\right)^d, \qfor k \geq k_0.
\end{equation}
It follows that
\begin{equation}
\label{eq:tnew}
\lim_{k\rightarrow\infty} \frac{1}{k}\log\log(\operatorname{meas}(U_k)) = \infty.
\end{equation}
\end{theorem}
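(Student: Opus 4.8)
The plan is to obtain \eqref{eq:tnew} from \eqref{eq:tnewer} (which is immediate) and to prove \eqref{eq:tnewer} by producing, inside $U_{k+\ell}$, a set whose measure is at least $\alpha^k(M^k(R,f))^d$. For the first implication: since $f$ is of transcendental type we have $\log M(r,f)/\log r\to\infty$ as $r\to\infty$, so $M^k(R,f)$ grows rapidly enough that the factor $\alpha^k$ and the exponent $d$ in \eqref{eq:tnewer} are irrelevant on the $\log\log$ scale; thus applying $\frac1k\log\log(\cdot)$ to \eqref{eq:tnewer} and letting $k\to\infty$ gives \eqref{eq:tnew}.

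I would prove \eqref{eq:tnewer} as follows. By Lemma~\ref{Lboundednottopconv} and Theorem~\ref{theo:curvefambetterDan} there is $\ell\in\N$ such that, writing $V_k:=U_{k+\ell}$, for every $k\geq 1$ the component $V_k$ is bounded and hollow, the origin lies in a bounded complementary component $\hullo(V_k)$ of $V_k$, and $V_{k-1}$ lies in a bounded complementary component of $V_k$. Set $\rho_k:=\inf\{|x|:x\in\partial_{out}V_k\}$. One checks that $B(0,\rho_k)\subseteq\hull(V_k)$, so $\operatorname{meas}(\hull(V_k))\geq c_d\rho_k^d$ with $c_d:=\operatorname{meas}(B(0,1))$, and in particular $\rho_k\to\infty$. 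The next ingredient is that $V_k$ grows at the maximum-modulus rate, namely $\rho_k\geq M^k(R',f)$ for some $R'>0$ and all large $k$: since $V_{k-1}$ surrounds the origin with outer boundary at distance at least $\rho_{k-1}$ and $V_k\supseteq f(V_{k-1})$, a blow-up argument of the type underlying the analogue of Theorem~A in \cite{SixsmithNicks1} pushes the outer boundary of $V_k$ beyond $M(c\rho_{k-1},f)$ for a fixed $c>0$, and iterating gives the claim.

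The remaining and main task is to bound $\operatorname{meas}(V_k)$, rather than $\operatorname{meas}(\hull(V_k))$, from below: this is exactly the difficulty highlighted before the theorem, since $\hull(V_k)\setminus V_k$ may consist of infinitely many complementary components which, a priori, occupy almost all of the region between the inner and outer boundaries. I would show, by induction on $k$, that $\operatorname{meas}(V_k)\geq\alpha^k\rho_k^d$ for a suitable $\alpha>0$ and all large $k$; combined with $\rho_k\geq M^k(R',f)$ and after absorbing the constant $\operatorname{meas}(V_{k_0})\rho_{k_0}^{-d}$ and the change of base point $R'\to R$ into $\alpha$, $\ell$ and $k_0$, this yields \eqref{eq:tnewer}. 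The inductive step uses $\operatorname{meas}(V_{k+1})\geq\operatorname{meas}(f(V_k))$ and the claim
\[
\operatorname{meas}(f(V_k))\ \geq\ \alpha\,(\rho_{k+1}/\rho_k)^d\,\operatorname{meas}(V_k),
\]
which, through the area formula
\[
\operatorname{meas}(f(V_k))\ \geq\ \big(\sup_y N(y,f,V_k)\big)^{-1}\int_{V_k}J_f,
\]
reduces to bounding the mean of $|Df|^d$ over $V_k$ from below: the preimage-counting factor $\sup_y N(y,f,V_k)$ is $O(\log M(\rho_k^\ast,f))$ by a Jensen-type estimate for quasiregular maps of transcendental type, with $\rho_k^\ast:=\sup\{|x|:x\in\partial_{out}V_k\}$, and this is negligible over $k$ steps, so what is needed is that $f$ expands $V_k$ in measure roughly by the factor $(\rho_{k+1}/\rho_k)^d$ by which the components move outwards.

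I expect this last point to be the principal obstacle. The danger is that the complementary components of $V_k$ are concentrated precisely where $|Df|$ is largest, so that $f(V_k)$ misses most of the measure it would otherwise pick up; ruling this out should exploit the way the complementary components of $V_k$ arise from those of $V_{k-1}$ under $f$ --- in particular the fact that every complementary component of $V_k$ other than $\hullo(V_k)$ is disjoint from $\hull(V_{k-1})$, so that the ``holes'' are pushed steadily outward under iteration --- together with the quasiregular distortion estimates relating $|Df|$, $J_f$ and the maximum modulus. Once the averaged lower bound on $|Df|^d$ over $V_k$ is in hand, the induction closes and \eqref{eq:tnewer}, and hence the theorem, follows.
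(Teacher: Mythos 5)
There is a genuine gap, and you have in fact located it yourself: the entire weight of your argument rests on the inductive claim $\operatorname{meas}(f(V_k))\geq\alpha\,(\rho_{k+1}/\rho_k)^d\operatorname{meas}(V_k)$, i.e.\ on an averaged lower bound for $J_f$ over $V_k$, and this is never established --- you only say that it ``should'' follow from the way the complementary components propagate and from distortion estimates. This is not a routine technicality. The scenario you describe (the holes of $V_k$ sitting exactly where $f$ expands, so that $f(V_k)$ captures little measure) is precisely the difficulty the paper flags in the discussion before the theorem, and nothing in your sketch rules it out: the structural fact that the holes are pushed outward under iteration says nothing about how much Lebesgue measure they occupy relative to where $J_f$ is large. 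There are also secondary problems: the multiplicity bound $\sup_y N(y,f,V_k)=O(\log M(\rho_k^\ast,f))$ is not a constant, and since $\log M(\rho_k^\ast,f)\approx\log M^{k+1}(R,f)$ grows superexponentially in $k$, dividing by it at every inductive step destroys the clean form $\alpha^k(M^k(R,f))^d$ of \eqref{eq:tnewer} (one would have to re-examine whether \eqref{eq:tnew} survives); and the claim $\rho_k\geq M^k(R',f)$ is asserted via an unspecified ``blow-up argument'' rather than proved. As it stands the proposal is a programme, not a proof.

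For comparison, the paper avoids estimating the Jacobian or the geometry of the holes altogether. It fixes $v\in U_{k_0}$ and a compact hypersurface $\partial W\subset U_{k_0}$ surrounding $\hullo(U_{k_0})$, so that $L:=\sup_{x\in\partial W}\mu_{U_{k_0}}(v,x)<\infty$. Since $f^{k-k_0}(\partial W)$ still surrounds the origin, one can pick $x_k\in\partial W$ with $|f^{k-k_0}(v)-f^{k-k_0}(x_k)|\geq|f^{k-k_0}(v)|$. Lemma~\ref{l3} and \eqref{Keq} give $\mu_{U_k}(f^{k-k_0}(v),f^{k-k_0}(x_k))\leq LK_I(f)^{k-k_0}$, while Lemma~\ref{Mohri} says that two points of $U_k$ at small $\mu_{U_k}$-distance but large Euclidean distance force $\operatorname{meas}(U_k)$ to be large; since $v\in A(f)$ by Lemma~\ref{Lboundednottopconv}, the Euclidean separation is at least $M^k(R,f)$ up to a shift $\ell$, and \eqref{eq:tnewer} follows with $\alpha$ essentially $K_I(f)^{1-d}$. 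If you want to salvage your route you would need to supply the averaged Jacobian bound, which appears to be at least as hard as the theorem itself; otherwise the conformal-invariant argument is the way to close the gap.
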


The structure of this paper is as follows. First, in Section~\ref{Sdefs} we recall various definitions, such as those of quasiregularity and the capacity of a condenser, and give some known results required in the rest of the paper. In Section~\ref{S.topology} we give some topological results, which are prerequisite to the proof of Theorem~\ref{theo:boundedhollowQFsimple}. We prove Theorem~\ref{theo:boundedhollowQFsimple} in Section~\ref{S:boundedhollowQFsimple}. Next, in Section~\ref{S:curvefambetterDan}, we prove Theorem~\ref{theo:curvefambetterDan} and Theorem~\ref{theo:curvefambetter}. Finally, in Section~\ref{S:measureofboundedhollowQF} we prove Theorem~\ref{theo:measureofboundedhollowQF}.
%
%
%
%
%
%
%
%
\section{Definitions and background results}
\label{Sdefs}
\subsection{Quasiregular maps}
We refer to \cite{MR1238941, MR950174} for a detailed treatment of quasiregular maps. Here we recall some definitions and properties used in this paper.

Suppose that $d\geq 2$, that $G \subset \R^d$ is a domain, and that $1 \leq p < \infty$. The \emph{Sobolev space} $W^1_{p,loc}(G)$ consists of those functions $f : G \to \R^d$ for which all first order weak partial derivatives exist and are locally in $L^p$. We say that $f$ is \emph{quasiregular} if $f \in W^1_{d,loc}(G)$ is continuous, and there exists $K_O \geq 1$ such that
\begin{equation}
\label{KOeq}
|D f(x)|^d \leq K_O J_f(x) \quad a.e.
\end{equation}
Here $D f(x)$ denotes the derivative,
$$
|D f(x)| := \sup_{|h|=1} |Df(x)(h)|
$$
is the norm of the derivative, and $J_f (x)$ denotes the Jacobian determinant. We also define
$$
\ell(Df(x)) := \inf_{|h|=1} |Df(x)(h)|.
$$

If $f$ is quasiregular, then there also exists $K_I \geq 1$ such that 
\begin{equation}
\label{KIeq}
K_I \ell(D f(x))^d \geq J_f(x) \quad a.e.
\end{equation}
The smallest constants $K_O$ and $K_I$ for which (\ref{KOeq}) and (\ref{KIeq}) hold are denoted by $K_O (f)$ and $K_I (f)$. 

If $f$ and $g$ are quasiregular maps, and $f$ is defined in the range of $g$, then $f \circ g$ is quasiregular and
\begin{equation}
\label{Keq}
K_I(f \circ g) \leq K_I(f)K_I(g),
\end{equation}
by \cite[Theorem II.6.8]{MR1238941}.

Many properties of holomorphic functions extend to quasiregular maps; we frequently use the fact that non-constant quasiregular maps are discrete and open. If $f$ is a discrete open map, then we denote by $B_f$ the \emph{branch set} of $f$; in other words the set of points $x$ such that $f$ is not a local homeomorphism at $x$. We use the following \cite{MR0172256, MR0220254}, \cite{MR0200928}, which was first proved by {\v{C}}ernavski{\u\i}. 
\begin{lemma}
\label{lemma:branch}
Suppose that $f : \R^d \to \R^d$ is a discrete open map. Then the topological dimension of $B_f$ and $f(B_f)$ is at most $d-2$.
\end{lemma}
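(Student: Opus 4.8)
This is the classical branch-set theorem of {\v{C}}ernavski{\u\i}, for which the streamlined argument is due to V\"ais\"al\"a; I would organise the proof in three stages. The plan rests on the dimension-theoretic fact that a \emph{closed} set $E\subset\R^d$ which does not \emph{locally separate} $\R^d$ — meaning that $W\setminus E$ is connected for every connected open $W\subset\R^d$ — has topological dimension at most $d-2$. So it suffices to prove that $f(B_f)$ does not locally separate $\R^d$: once $\dim f(B_f)\le d-2$ is in hand, the bound $\dim B_f\le d-2$ follows formally, because $f$ restricted to a small enough \emph{normal neighbourhood} (a bounded domain $U$ with $\overline U$ compact on which $f$ is proper) is a proper, hence closed, map with discrete — thus $0$-dimensional — fibres, and a closed surjection with $0$-dimensional fibres does not raise topological dimension, by the Hurewicz dimension-lowering theorem; I would apply this to $f$ restricted to the closed set $B_f\cap U$.

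When $d=2$ the target dimension is $0$, and here I would bypass the topological machinery entirely using Sto\"ilow's factorisation: a non-constant discrete open map $f:\R^2\to\R^2$ (which is automatically either orientation-preserving or orientation-reversing throughout, since the sign of the local degree is locally constant) can be written, after possibly precomposing with a reflection, as $f=\varphi\circ h$ with $h$ a homeomorphism and $\varphi$ holomorphic. Then $B_f=h^{-1}(B_\varphi)$ and $f(B_f)=\varphi(B_\varphi)$, where $B_\varphi$ is the zero set of $\varphi'$, which is discrete; hence both $B_f$ and $f(B_f)$ are $0$-dimensional.

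For $d\ge 3$ the work is to show that $f(B_f)$ does not locally separate $\R^d$, and here I would use the local structure theory of discrete open maps. Every point has arbitrarily small normal neighbourhoods $U$ for which $f\vert_U\colon U\to V:=f(U)$ is proper, open and discrete of constant local degree $\mu$, and restricts to a covering map $U\setminus f^{-1}(f(B_f))\to V\setminus f(B_f)$ of degree $\mu$. There are then two steps: first, that $f(B_f)$ is nowhere dense, which in particular gives $\dim f(B_f)\le d-1$; and second — the crux and the main obstacle — that $V\setminus f(B_f)$ is connected for every normal neighbourhood $V$. I would prove this by induction on the local degree $\mu$: for $\mu=1$ the map $f\vert_U$ is a homeomorphism and this part of $f(B_f)$ is empty, while for $\mu\ge 2$ one joins two points of $V\setminus f(B_f)$ by a path in $V$, lifts it maximally through the covering, and analyses a lift that terminates — it can only do so over a branch point, at which the relevant local degree strictly drops, so the inductive hypothesis applies, and one ``peels off'' the branching one sheet at a time via a monodromy argument. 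The subtlety, and the reason a direct argument fails, is that one \emph{cannot} simply perturb the connecting path off $f(B_f)$: a priori $f(B_f)$ is only known to be $(d-1)$-dimensional, not $(d-2)$-dimensional, so a generic path need not avoid it, and the covering-space bookkeeping is unavoidable. Assembling the three stages gives $\dim B_f\le d-2$ and $\dim f(B_f)\le d-2$.
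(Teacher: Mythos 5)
The paper offers no proof of Lemma~\ref{lemma:branch} to compare against: this is {\v{C}}ernavski{\u\i}'s branch-set theorem, imported as a black box with citations to {\v{C}}ernavski{\u\i} and to V\"ais\"al\"a's later proof. Measured against that classical argument, your reductions are correct and are exactly the standard ones. The dimension-theoretic characterisation you invoke (a closed set $E\subset\R^d$ has topological dimension at most $d-2$ if and only if it separates no domain) is Hurewicz--Wallman IV.4; the passage from $\dim f(B_f)\le d-2$ to $\dim B_f\le d-2$ via the Hurewicz dimension-lowering theorem, applied to the proper restriction of $f$ to $B_f\cap \overline{U}$ for normal neighbourhoods $U$, is precisely how V\"ais\"al\"a gets $\dim B_f=\dim f(B_f)$; and the Sto\"{\i}low factorisation does dispose of $d=2$.

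The gap is exactly where you locate the crux: the connectedness of $V\setminus f(B_f)$ for normal neighbourhoods $V$ when $d\ge 3$. What you write there is a description of what a proof must accomplish, not a proof --- ``peels off the branching one sheet at a time via a monodromy argument'' is the entire content of {\v{C}}ernavski{\u\i}'s theorem and occupies essentially all of V\"ais\"al\"a's paper. The mechanism as described is also not quite right: for the proper map $f|_U\colon U\to V$ of a normal neighbourhood, maximal lifts of paths are \emph{total} (they do not terminate prematurely); the difficulty is not that a lift stops over a branch point but that lifts merge or branch there, and the induction has to track how the local index distributes among the several continuations, while the path itself cannot be assumed to avoid $f(B_f)$ (as you rightly note). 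So, as a self-contained argument, the proposal stops exactly where the real work begins; since the paper itself quotes the result rather than proving it, the honest course here is to do the same and cite V\"ais\"al\"a, rather than to present this outline as a proof.
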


If $f : \R^d \to \R^d$ and $A \subset \R^d$, then we denote the \emph{maximum multiplicity} of $f$ in $A$ by 
$$
N(f, A) := \sup_{y\in\R^d} \operatorname{card}(A \cap f^{-1}(y)),
$$
and we denote the \emph{local index} at a point $x$ by 
$$
i(x,f) := \inf_U \left(\sup_{y\in\R^d} \operatorname{card}(U \cap f^{-1}(y))\right),
$$
where the infimum is taken over all neighbourhoods $U$ of $x$. We use the fact \cite[Proposition I.4.10]{MR1238941} that if $f : \R^d \to \R^d$ is quasiregular, then $x \in B_f$ if and only if $i(x, f) \geq 2$.
\subsection{The capacity of a condenser}
An important tool in the theory of quasiregular maps is the capacity of a condenser, and we recall this concept very briefly. If $A \subset \R^d$ is an open set, and $C \subset A$ is non-empty and compact, then the pair $(A,C)$ is called a \emph{condenser}. Its \emph{capacity}, which we denote by cap$(A,C)$, is defined by
$$
\operatorname{cap}(A,C) := \inf_u \int_A |\nabla u|^d dm.
$$
Here the infimum is taken over all non-negative functions $u \in C^\infty_0(A)$ that satisfy $u(x) \geq 1$, for $x \in C$.

If cap$(A,C) = 0$ for some bounded open set $A$ containing $C$, then cap$(A',C) = 0$ for every bounded open set $A'$ containing $C$; see \cite[Lemma III.2.2]{MR1238941}. In this case we say that $C$ \emph{has zero capacity}, and write cap $C = 0$; otherwise we say that $C$ \emph{has positive capacity}, and write cap~$C > 0$. For an unbounded closed set $C\subset\R^d$, we say that $C$ has zero capacity if every compact subset of $C$ has zero capacity. Roughly speaking, cap $C = 0$ means that $C$ is a ``small'' set.

The following \cite[Theorem II.10.10]{MR1238941} gives a link between the capacity of a condenser and quasiregular maps.
\begin{theorem}
\label{theo:condenser}
Suppose that $f : \R^d \to \R^d$ is a non-constant quasiregular map, and that $(A, C)$ is a condenser. Then
$$
\operatorname{cap }(f(A), f(C)) \leq \frac{K_I(f)}{m(f, C)} \operatorname{cap }(A, C),
$$
where
$$
m(f, C) := \inf_{y \in f(C)} \sum_{x \in C \cap f^{-1}(y)} i(x, f).
$$
\end{theorem}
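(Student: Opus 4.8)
The plan is to recast the statement as an inequality between conformal moduli of path families and then to exploit the branched-covering structure of $f$. Write $M(\cdot)$ for the conformal ($d$-dimensional) modulus, and for $E,F$ lying in the closure of an open set $G$ let $\Delta(E,F;G)$ denote the family of paths joining $E$ to $F$ within $G$. Since a non-constant quasiregular map is continuous and open, $f(A)$ is open and $f(C)$ is a compact subset of $f(A)$, so that $(f(A),f(C))$ is again a condenser. By the classical identity expressing the capacity of a condenser $(A,C)$ as the modulus of the family of paths joining $C$ to the complement of $A$ inside $A$ (see \cite{MR1238941}), the theorem is equivalent to the assertion
$$
M\bigl(\Delta(f(C),\partial f(A);f(A))\bigr)\;\le\;\frac{K_I(f)}{m(f,C)}\,M\bigl(\Delta(C,\partial A;A)\bigr).
$$

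To prove this modulus inequality I would combine the maximal path-lifting theorem for discrete open maps with the $K_I$-inequality for moduli of path families under quasiregular maps (both in \cite{MR1238941}). Put $\Gamma'=\Delta(f(C),\partial f(A);f(A))$ and $\Gamma=\Delta(C,\partial A;A)$. Fix $\beta\in\Gamma'$, parametrised so that $y_0:=\beta(0)\in f(C)$, and let $x_1,\dots,x_n$ be the finitely many points of $C\cap f^{-1}(y_0)$. The path $\beta$ admits a system of maximal lifts in which exactly $i(x_j,f)$ of them start at each $x_j$; since $\beta$ terminates on $\partial f(A)$, a set disjoint from the open set $f(A)$, the properties of maximal lifts force each such lift to leave every compact subset of $A$, hence to join $C$ to $\partial A$ inside $A$, so it belongs to $\Gamma$. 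Thus over each $\beta\in\Gamma'$ we obtain at least $\sum_{x\in C\cap f^{-1}(y_0)}i(x,f)\ge m(f,C)$ members of $\Gamma$, arranged so that the number of them passing through any fixed point at any fixed parameter value is at most the local index of $f$ there. This is exactly the hypothesis needed to apply the $K_I$-inequality with multiplicity $m=m(f,C)$ (which is an integer, being an infimum of sums of positive integers); that inequality then delivers the displayed estimate and hence the theorem.

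I expect the main obstacle to be this lifting step and its bookkeeping. One must appeal to the maximal-lift theorem in the correct form to be sure that each lift really reaches $\partial A$, a point needing a little care when $A$ is unbounded (in all of our applications $A$ is bounded), and one must check that membership in $\Gamma$ is unaffected when a lift covers only an initial subpath of $\beta$. The genuinely delicate point is to organise the sheets over the various branch points of $f$ into a single family whose multiplicity at every point is dominated by $i(\cdot,f)$ while its cardinality is bounded below by $m(f,C)$; this relies on the local degree theory of quasiregular maps and on the existence of normal neighbourhoods, and it is precisely where the definition $m(f,C)=\inf_{y\in f(C)}\sum_{x\in C\cap f^{-1}(y)}i(x,f)$ is calibrated to what the $K_I$-inequality can absorb.
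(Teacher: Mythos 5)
This result is not proved in the paper at all; it is quoted verbatim from Rickman's book (Theorem II.10.10 of \cite{MR1238941}), and your argument reconstructs exactly the proof given there: the capacity--modulus identity reduces the statement to V\"ais\"al\"a's inequality for path families (Rickman, Theorem II.9.1), applied to the system of essentially separate maximal liftings of each path in $\Delta(f(C),\partial f(A);f(A))$ provided by the complete lifting theorem (Rickman, Theorem II.3.2), with the lifts taken under $f|_A$ so that each one necessarily joins $C$ to $\partial A$ inside $A$. Your sketch is correct, and the delicate points you flag --- termination of maximal lifts on $\partial A$, subpaths still belonging to the condenser family, and the multiplicity bookkeeping dominated by $i(\cdot,f)$ --- are precisely what those two cited theorems are designed to handle.
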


If $(A,C)$ is a condenser in $\R^d$ such that both $C$ and $\R^d \setminus A$ are connected, then we say that it is \emph{ringlike}. We use the following estimate on the capacity of a ringlike condenser \cite[Lemma 7.34]{MR950174}; this is closely related to the Loewner property for $\R^d$ (see \cite[Definition~3.1]{MR1654771}).
\begin{lemma}
\label{Lemma7.34}
Suppose that $(A,C)$ is a ringlike condenser and that $A$ is bounded. Suppose also that $a, b \in C$ are distinct, and that $c \notin A$. Then
$$
\operatorname{cap} (A, C) \geq \tau_d\left(\frac{|a-c|}{|a-b|}\right).
$$
\end{lemma}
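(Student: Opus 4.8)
\textbf{Proof proposal for Theorem~\ref{theo:measureofboundedhollowQF}.}

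The plan is to exploit the fact, recalled above (see Lemma~\ref{Lboundednottopconv} referenced in the introduction, i.e.\ the quasiregular analogue of Theorem~A), that for a bounded hollow quasi-Fatou component $U_0$, each $U_k$ is bounded and hollow, that $U_k$ eventually lies in a bounded complementary component of $U_{k+1}$, and that $\operatorname{dist}(0,U_k)\to\infty$. Fix $R>0$. Since $\operatorname{dist}(0,U_k)\to\infty$, for all large $k$ the inner complementary component $\hullo(U_k)$ is defined and contains a ball of radius $R$ about the origin once $k$ is large enough; more usefully, because $f$ maps $U_k$ into (a component of the complement of $U_{k+1}$ containing) $U_{k+1}$, a point of $\partial_{out}U_k$ of maximal modulus gets mapped forward, and the maximum modulus function controls how large $\hull(U_{k+1})$ must be. The first step is therefore to establish, using the nesting $\hull(U_k)\subset\hullo(U_{k+1})$ valid for large $k$ together with the definition of $M(R,f)$, a lower bound of the form $\sup\{|x|:x\in\partial_{inn}U_{k+1}\}\ge M(\rho_k,f)$ where $\rho_k$ is comparable to $\operatorname{dist}(0,U_k)$, and hence iteratively $\hullo(U_{k+\ell})$ contains a sphere of radius at least $M^k(R,f)$ for a suitable fixed $\ell$ and all large $k$.

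Knowing that the inner hole is huge is not yet enough: I need $U_{k+\ell}$ itself to have large measure, not merely to surround a large region. For this the second step is to apply Theorem~\ref{theo:curvefambetterDan}: the ratio of the inner radius of $U_k$ to its outer radius tends to infinity superexponentially, so $U_k$ contains a round annulus $\{x: r_k<|x|<R_k\}$ with $R_k/r_k$ enormous and $r_k\to\infty$ (this is essentially Theorem~\ref{theo:curvefambetter} in the finitely-connected case, but in the general case one still has, directly from Theorem~\ref{theo:curvefambetterDan}, that $U_k$ meets every sphere of radius between $\sup\partial_{inn}U_k$ and $\inf\partial_{out}U_k$, and moreover contains a definite chunk there). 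Combining this with the first step — the inner boundary of $U_{k+\ell}$ sits at radius $\ge M^k(R,f)$ and the outer boundary at radius superexponentially larger — gives that $U_{k+\ell}$ contains an annular region whose inner radius is $\ge c\, M^k(R,f)$ and whose outer radius is at least, say, $2\,M^k(R,f)$. The $d$-dimensional measure of such an annulus is at least $\alpha^k (M^k(R,f))^d$ for a suitable $\alpha\in(0,1)$ absorbing the constants, which is \eqref{eq:tnewer}.

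For the deduction of \eqref{eq:tnew} from \eqref{eq:tnewer}: a transcendental-type quasiregular map satisfies $\log M(r,f)/\log r\to\infty$ as $r\to\infty$ (this is the quasiregular Picard-type growth estimate; it follows from Rickman's theorem, which is invoked in the excerpt to conclude $J(f)$ is unbounded), so $\frac1k\log\log M^k(R,f)\to\infty$ by the standard argument for iterated maximum-modulus functions. Taking $\log\log$ of \eqref{eq:tnewer} gives $\log\log\operatorname{meas}(U_{k+\ell}) \ge \log\bigl(k\log\alpha + d\log M^k(R,f)\bigr) \ge \log\log M^k(R,f) + O(1)$ for large $k$, and dividing by $k$ and shifting the index by the fixed constant $\ell$ yields \eqref{eq:tnew}.

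The main obstacle I expect is the second step — passing from ``$U_{k+\ell}$ surrounds a large region'' to ``$U_{k+\ell}$ \emph{contains} a region of large measure''. The issue is that a hollow domain with infinitely many complementary components could, a priori, be thin. Here one genuinely needs Theorem~\ref{theo:curvefambetterDan} to guarantee that between the inner and outer boundaries there is real room, and then a covering/Loewner-type argument (via Lemma~\ref{Lemma7.34} and Theorem~\ref{theo:condenser}, applied to the condenser formed by $U_k$ and a suitable compact piece of it) to show $U_k$ must actually occupy a positive proportion of the annulus $r_k<|x|<R_k$ — otherwise one could separate $\partial_{inn}U_k$ from $\partial_{out}U_k$ within $QF(f)$ by a set of small capacity, contradicting that these lie on the boundary of the Julia set and that the capacity estimate of Lemma~\ref{Lemma7.34} forces the separating ring to be fat. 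Getting the bookkeeping of constants right so that a single $\alpha$ works uniformly in $k$, and handling the index shift by $\ell$ cleanly, is the remaining technical care required.
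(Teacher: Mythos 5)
Your proposal does not prove the statement in question. The statement is Lemma~\ref{Lemma7.34}, the capacity lower bound $\operatorname{cap}(A,C)\geq\tau_d\bigl(|a-c|/|a-b|\bigr)$ for a bounded ringlike condenser; what you have written is a proof sketch for Theorem~\ref{theo:measureofboundedhollowQF}, an entirely different result. Nothing in your argument engages with the condenser $(A,C)$, the Teichm\"uller function $\tau_d$, or the modulus-of-curve-families comparison that such an estimate requires. (For the record, the paper does not prove this lemma either: it is quoted verbatim from Vuorinen's book, \cite[Lemma 7.34]{MR950174}, and is closely related to the Loewner property of $\R^d$. A proof would compare the family of curves joining $C$ to $\partial A$ with the extremal family of the Teichm\"uller condenser with vertices at $a$, $b$, $c$ and $\infty$, using monotonicity of the modulus; your sketch contains none of this.)

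Even judged as a proof of Theorem~\ref{theo:measureofboundedhollowQF}, the sketch has a genuine gap at exactly the point you flag as the ``main obstacle''. For a bounded hollow component with infinitely many complementary components, $U_{k+\ell}$ need \emph{not} contain any round annulus, and Theorem~\ref{theo:curvefambetterDan} does not force $U_k$ to ``occupy a positive proportion'' of the region between its inner and outer boundaries --- the paper explicitly remarks that in the infinitely-connected case Theorem~\ref{theo:curvefambetterDan} gives no information about the size of $U_k$ because of the infinitely many holes. The paper's actual route avoids this entirely: it fixes $v\in U_{k_0}$ and a curve $\partial W$ surrounding the inner hole, notes that $f^{k-k_0}(\partial W)\subset U_k$ still surrounds the origin so that some image point $f^{k-k_0}(x_k)$ satisfies $|f^{k-k_0}(v)-f^{k-k_0}(x_k)|\geq|f^{k-k_0}(v)|$, bounds $\mu_{U_k}$ of these two points above by $LK_I(f)^{k-k_0}$ via Lemma~\ref{l3}, and then converts this into the measure lower bound via Mohri's inequality (Lemma~\ref{Mohri}), with $|f^{k-k_0}(v)|\geq M^{k-\ell}(R,f)$ supplied by $\overline{U_{k_0}}\subset A(f)$. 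That argument never needs $U_k$ to contain an annulus, which is why it works in the infinitely-connected case where your step two breaks down.
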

Here $\tau_d : (0, \infty) \to (0, \infty)$ is the capacity of the $d$-dimensional \emph{Teichm\"uller condenser}. We refer to, for example, \cite[Section 7]{MR950174}, for more information regarding this function. The only property of $\tau_d$ that we require is the following lower bound \cite[Equation 7.24]{MR950174}.
\begin{lemma}
\label{Theorem7.263}
There are positive constants $s_d$ and $t_d$, which depend only on $d$, such that
$$
\tau_d(r) \geq s_d(\log(t_d (r+1)))^{1-d}, \qfor r > 0.
$$
\end{lemma}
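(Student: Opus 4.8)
The inequality is a classical estimate for the capacity of the Teichm\"uller condenser, quoted from \cite[Equation 7.24]{MR950174}; in the paper we would simply invoke it, but here is how one arrives at it. Write $\omega_{d-1}$ for the surface area of the unit sphere in $\R^d$ and fix a unit vector $e_1$. Recall that $\tau_d(r)$ is the capacity of the ring domain whose complementary continua are the segment $[-e_1,0]$ and the ray $[re_1,\infty)$; equivalently it is the $d$-modulus of the family of curves joining these two continua.

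The plan is to reduce to a ring domain whose capacity can be estimated explicitly. By Gehring's symmetrisation theory for rings in $\R^d$ (see \cite{MR950174}), which generalises Teichm\"uller's modulus theorem, $\tau_d(r)=2^{1-d}\gamma_d(\sqrt{r+1})$, where $\gamma_d(s)$ denotes the capacity of the Gr\"otzsch ring $B^d\setminus[0,s^{-1}e_1]$ (a unit ball with a short radial slit removed), $s>1$. In fact only the inequality $\tau_d(r)\geq 2^{1-d}\gamma_d(\sqrt{r+1})$ is needed below, and this follows from spherical symmetrisation alone, so it suffices to bound $\gamma_d$ from below.

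The one non-elementary ingredient is the estimate $\gamma_d(s)\geq\omega_{d-1}(\log(\lambda_d s))^{1-d}$ for $s>1$, where $\lambda_d>1$, the Gr\"otzsch ring constant, depends only on $d$. The reverse inequality $\gamma_d(s)\leq\omega_{d-1}(\log s)^{1-d}$ is immediate, since the slit lies inside $\overline{B}^d(0,s^{-1})$, enlarging the bounded complementary component to this ball cannot decrease the capacity, and the spherical annulus $\{x:s^{-1}<|x|<1\}$ has capacity exactly $\omega_{d-1}(\log s)^{1-d}$. For the lower bound I would use the extremality of the Gr\"otzsch ring — it has the largest capacity among all rings separating a continuum through the origin of radius $s^{-1}$ from the complement of $B^d$ — and apply spherical symmetrisation to compare such a ring with a round annulus, the distortion being absorbed into the multiplicative constant $\lambda_d$; the remaining computation is that of the capacity of a round annulus, already done above. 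This is precisely the step that is not elementary: it rests on the spherical symmetrisation theorem for condensers in $\R^d$ and on the finiteness of $\lambda_d$, both standard but substantial and both found in \cite{MR950174}, which is why in practice we quote the estimate directly.

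Assembling the pieces is then routine. For $r>0$ we obtain $\tau_d(r)\geq 2^{1-d}\gamma_d(\sqrt{r+1})\geq 2^{1-d}\omega_{d-1}\big(\log(\lambda_d(r+1)^{1/2})\big)^{1-d}$, and since $(r+1)^{1/2}\leq r+1$, since $\lambda_d(r+1)^{1/2}>1$, and since $1-d<0$, the right-hand side is at least $2^{1-d}\omega_{d-1}\big(\log(\lambda_d(r+1))\big)^{1-d}$. This gives the assertion, with $s_d:=2^{1-d}\omega_{d-1}$ and $t_d:=\lambda_d$. Apart from the Gr\"otzsch lower bound, every ingredient (the functional identity and the capacity of a round annulus) is classical, so the only genuine obstacle is the symmetrisation estimate for the Gr\"otzsch ring.
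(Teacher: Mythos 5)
Your proposal is correct and takes the same route as the paper, which states this lemma without proof as a direct quotation of \cite[Equation 7.24]{MR950174}. Your supplementary sketch --- the functional relation $\tau_d(r)\geq 2^{1-d}\gamma_d(\sqrt{r+1})$, the Gr\"otzsch lower bound involving $\lambda_d$, and the final monotonicity step using $\sqrt{r+1}\leq r+1$ together with the negative exponent $1-d$ --- is the standard derivation, and the elementary manipulations all check out.
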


\subsection{The conformal invariant $\mu$}
As in \cite{SixsmithNicks1, SixsmithNicks2}, we use a conformal invariant that has proved useful when working with quasiregular maps. Since the definition of this invariant is based on auxiliary definitions which are not important for what follows, we merely state the properties we use, and refer to \cite{MR950174} for full details. Let $G \subset \R^d$ be a domain. Vuorinen \cite[p.103]{MR950174} defines a function
$$
  \mu_G : G \times G \to \R
$$
with the properties that $\mu_G$ is a conformal invariant, and is a metric if cap $\partial G > 0$. It is noted that if $D \subset G$ is a domain, then
\begin{equation}
\label{smallerdomain}
\mu_D(x,y) \geq \mu_G(x,y), \qfor  x, y \in D.
\end{equation}

We use the following bound on the change in this metric under a quasiregular map \cite[Theorem 10.18]{MR950174}. 
\begin{lemma}
\label{l3}
Suppose that $f : G \to \mathbb{R}^d$ is a non-constant quasiregular map. Then
$$
\mu_{f(G)}(f(a), f(b)) \leq K_I(f) \mu_G(a, b), \qfor a, b \in G.
$$
\end{lemma}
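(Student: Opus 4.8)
The plan is to deduce the inequality directly from the definition of $\mu_G$ as an infimum of condenser capacities, combined with the capacity estimate of Theorem~\ref{theo:condenser}. Recall that for a domain $G\subsetneq\R^d$ and points $x,y\in G$, one has
$$
\mu_G(x,y)=\inf_C\operatorname{cap}(G,C),
$$
where the infimum is over all continua $C$ with $x,y\in C\subset G$ (see \cite[p.~103]{MR950174}). Thus it is enough to compare $\operatorname{cap}(f(G),f(C))$ with $\operatorname{cap}(G,C)$ over a suitable supply of test continua.

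First I would fix $a,b\in G$ and $\varepsilon>0$, and choose a continuum $C$ with $a,b\in C\subset G$ and $\operatorname{cap}(G,C)<\mu_G(a,b)+\varepsilon$. Since $f$ is continuous, $f(C)$ is a continuum; it is contained in $f(G)$, which is a domain because $f$ is non-constant and hence open, and it contains $f(a)$ and $f(b)$. Consequently $f(C)$ is an admissible competitor in the definition of $\mu_{f(G)}$, so $\mu_{f(G)}(f(a),f(b))\leq\operatorname{cap}(f(G),f(C))$. (If $f(a)=f(b)$ the left-hand side is $0$ and there is nothing to prove, so one may assume $f(C)$ is non-degenerate.)

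Next I would apply Theorem~\ref{theo:condenser} to the condenser $(G,C)$, which is legitimate since $f$ is quasiregular on the domain $G$ containing it. This gives
$$
\operatorname{cap}(f(G),f(C))\leq\frac{K_I(f)}{m(f,C)}\operatorname{cap}(G,C),
$$
and since $f(C)$ is non-empty with $i(x,f)\geq 1$ for every $x$, we have $m(f,C)\geq 1$. Combining the displayed estimates yields $\mu_{f(G)}(f(a),f(b))<K_I(f)\bigl(\mu_G(a,b)+\varepsilon\bigr)$; letting $\varepsilon\to 0$ finishes the proof.

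The one point needing care is essentially bookkeeping: Theorem~\ref{theo:condenser} as quoted concerns maps of $\R^d$ and, implicitly, condensers on which the conformal capacity is well-behaved, so one must confirm it remains valid with $A=G$ an arbitrary (possibly unbounded) domain and $f$ quasiregular only on $G$. This is not a real difficulty, since that estimate descends from the path-modulus inequality $\operatorname{M}(f\Gamma)\leq K_I(f)\operatorname{M}(\Gamma)$ and is local in nature; but it should be verified, along with the harmless remark that replacing ``continua joining $a$ to $b$'' by ``paths joining $a$ to $b$'' in the definition of $\mu_G$ changes nothing, since continuous images of paths are paths.
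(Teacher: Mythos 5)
The paper does not prove this lemma at all: it is quoted verbatim from \cite[Theorem 10.18]{MR950174}, so there is no in-paper argument to compare against. Your proof is correct and is essentially Vuorinen's own: push a near-extremal continuum $C$ forward and apply the $K_I$-capacity inequality with $m(f,C)\geq 1$; the only point you flag --- whether that inequality applies with $A=G$ unbounded and $f$ defined only on $G$ --- is resolved by noting that the original statement \cite[Theorem II.10.10]{MR1238941} is for an arbitrary condenser $(A,C)$ with $A$ contained in the domain of definition, and that $\mu_G$ is indeed an infimum of such capacities by Hesse's equality of capacity with the modulus of the connecting curve family.
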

We use the following estimate; see \cite[Proposition 3]{MR752460}, which follows directly from \cite[Lemma 5.9]{MR0259114}.
\begin{lemma}
\label{Mohri}
Suppose that $G \subset \R^d$ is a domain such that meas$(G) < \infty$. Then there is a positive constant $b_d$, which depends only on $d$, such that
$$
\mu_G(x, y)^{d-1} \geq b_d \frac{|x - y|^d}{\operatorname{meas}(G)}, \qfor x, y \in G. 
$$
\end{lemma}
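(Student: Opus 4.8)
The plan is to express the conformal invariant $\mu_G(x,y)$ as an infimum of conformal condenser capacities and then to prove the required bound for such a capacity by means of the coarea formula and the isoperimetric inequality in $\R^d$, following the cited work of Gehring and Mohri. Recall that, for distinct $x,y\in G$, the quantity $\mu_G(x,y)$ equals the infimum of $\operatorname{cap}(G,C)$ over all continua $C\subset G$ with $x,y\in C$, where $\operatorname{cap}(G,C)$ is the conformal capacity of the condenser $(G,C)$ from Section~\ref{Sdefs}. Each such continuum satisfies $\operatorname{diam}(C)\geq |x-y|$, so the lemma follows once we know that
$$
\operatorname{cap}(A,C)^{d-1}\geq b_d\,\frac{\operatorname{diam}(C)^d}{\operatorname{meas}(A)}
$$
for every condenser $(A,C)$ in which $A$ has finite measure and $C$ is a continuum; indeed, applying this with $A=G$ and taking the infimum over $C$ gives $\mu_G(x,y)^{d-1}\geq b_d|x-y|^d/\operatorname{meas}(G)$.

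To prove the displayed capacity estimate I would work from the variational definition $\operatorname{cap}(A,C)=\inf_u\int_A|\nabla u|^d\,dm$. Given an admissible competitor $u$, replacing it by $\min\{u,1\}$ we may assume $0\leq u\leq 1$, $u\equiv 1$ on $C$, and $\operatorname{supp}u$ compact in $A$, without increasing the $d$-energy. Write $\delta:=\operatorname{diam}(C)$ and $V:=\operatorname{meas}(A)$. For every $s\in(0,1)$ the superlevel set $E_s:=\{u>s\}$ contains the continuum $C$, hence has diameter at least $\delta$, while $E_s\subset A$ gives $\operatorname{meas}(E_s)\leq V$. Combining the coarea formula for $\int_A|\nabla u|^d\,dm$ with the isoperimetric inequality on the level surfaces $\partial E_s$ and with H\"older's inequality then yields a lower bound for the $d$-energy in terms of $\delta$ and $V$. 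A conceptually cleaner route to the same conclusion is to first apply spherical symmetrization about $x$ to the condenser $(A,C)$: this operation does not increase $\operatorname{cap}(A,C)$ and preserves $\operatorname{meas}(A)$, and it replaces $C$ by a set containing a radial segment of length at least $\delta$, so one may assume $A$ is spherically symmetric about the origin and $C\supseteq[0,\delta e_1]$. It then remains to check that, among spherically symmetric domains of measure $V$, the capacity $\operatorname{cap}(A,[0,\delta e_1])$ is, up to a constant depending only on $d$, smallest when $A$ is the ball of measure $V$, in which case $\operatorname{cap}(A,[0,\delta e_1])$ is the capacity of a Gr\"otzsch-type ring, for which a standard lower bound (comparable to the estimate for the Teichm\"uller condenser in Lemma~\ref{Theorem7.263}) completes the proof. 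Either chain of estimates is carried out in \cite[Lemma~5.9]{MR0259114} and \cite[Proposition~3]{MR752460}, which may simply be quoted.

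The reduction in the first paragraph is routine; the heart of the matter, and the main obstacle, is the geometric measure theory input of the second. The delicate point is that a superlevel set $E_s$ containing the continuum $C$ can be an arbitrarily thin tube, so that its perimeter need not be bounded below by $\delta^{d-1}$ and one cannot naively combine the isoperimetric inequality with the diameter bound. One must instead play off the small perimeter of such a tube against the large gradient of $u$ needed to drop from $1$ to near $0$ across it --- or, equivalently, use the symmetrization monotonicity to reduce to the explicitly computable ring configuration --- and then verify that the exponents combine to give exactly the power $\operatorname{diam}(C)^d/\operatorname{meas}(A)$ with a constant $b_d$ depending only on $d$. Since these estimates are precisely the content of the references cited in the statement, we do not reproduce the details here.
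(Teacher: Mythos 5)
The paper offers no proof of this lemma: it is quoted directly from Mohri \cite[Proposition 3]{MR752460} and Martio--Rickman--V\"ais\"al\"a \cite[Lemma 5.9]{MR0259114}, exactly the sources you ultimately defer to. Your reduction of $\mu_G(x,y)$ to an infimum of condenser capacities over continua joining $x$ and $y$, followed by the diameter--measure capacity estimate from those references, is the standard route and matches the paper's treatment, so there is nothing to correct.
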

\subsection{The maximum modulus function and the fast escaping set}
We use the following property of the iterated maximum modulus \cite[Corollary 2.2]{SixsmithNicks1}.
\begin{lemma}
\label{Blemm}
Suppose that $f : \mathbb{R}^d \to \mathbb{R}^d$ is a quasiregular function of transcendental type. If $R>0$ is sufficiently large that $M^k(R, f)\rightarrow\infty$ as $k\rightarrow\infty$, then
\begin{equation}
\label{Beq}
\lim_{k\rightarrow\infty} \frac{1}{k} \log \log M^k(R, f) = \infty.
\end{equation}
\end{lemma}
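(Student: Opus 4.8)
The plan is to reduce everything to the one growth estimate that separates transcendental type from polynomial type: for every $A > 1$ there exists $\rho_A > 0$ such that $M(r,f) \ge r^A$ for all $r \ge \rho_A$; equivalently, $\log M(r,f)/\log r \to \infty$ as $r \to \infty$. This is the only place where the transcendental character of $f$ should enter, and I expect it to be the main obstacle. In dimension two it is immediate from the convexity of $\log M(r,f)$ in $\log r$ for a transcendental entire function, but for a quasiregular map of transcendental type one must invoke a genuine theorem: either a Liouville/Painlev\'e-type removable-singularity statement (a quasiregular map defined near $\infty$ whose maximum modulus grows no faster than a power of $r$ extends quasiregularly across $\infty$, and so cannot have an essential singularity there), or --- more economically --- the growth lemma that already underlies \cite[Corollary 2.2]{SixsmithNicks1}. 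I would record this estimate as a preliminary lemma with the appropriate citation, noting that we may also take $\rho_A \ge e$.

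Granting this, the rest is an elementary iteration. First I would fix $A > 1$ and a corresponding $\rho_A \ge e$, and use the hypothesis $M^k(R,f) \to \infty$ to pick $k_1$ with $M^k(R,f) \ge \rho_A$ for all $k \ge k_1$. Writing $M_k := M^k(R,f)$, the growth estimate then gives $M_{k+1} = M(M_k, f) \ge M_k^{\,A}$ for $k \ge k_1$; since $M_{k_1} \ge \rho_A \ge e$, iterating and taking logarithms yields $\log M_k \ge A^{\,k-k_1}\log M_{k_1} \ge A^{\,k-k_1}$ for $k \ge k_1$. Taking logarithms once more gives $\log\log M_k \ge (k-k_1)\log A$, and hence
\[
\liminf_{k\to\infty} \frac{1}{k}\log\log M^k(R,f) \;\ge\; \lim_{k\to\infty}\frac{k-k_1}{k}\log A \;=\; \log A .
\]
Since $A > 1$ is arbitrary, this liminf is $+\infty$, which is exactly \eqref{Beq}.

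The only routine points I would check are that the logarithms above are legitimately positive --- this is where $\rho_A \ge e$ is used, giving $\log M_k \ge 1$ for $k \ge k_1$ --- and that $k_1$ depends only on $A$ and $R$, so that $(k-k_1)/k \to 1$. Note that no capacity estimates, no properties of the conformal invariant $\mu$, and nothing about the branch set are needed here; once the growth estimate is in hand, the proof is formally identical to the classical one for transcendental entire functions, all of the higher-dimensional content being absorbed into that estimate.
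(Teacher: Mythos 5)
Your proof is correct, and it is essentially the standard argument: the paper does not prove Lemma~\ref{Blemm} itself but simply quotes it as \cite[Corollary 2.2]{SixsmithNicks1}, where it is derived in exactly the way you describe, by iterating the growth estimate $\log M(r,f)/\log r\to\infty$ (the genuinely quasiregular ingredient, which you rightly isolate and flag for citation). Your bookkeeping with $\rho_A\ge e$ and $k_1$ is sound, so nothing further is needed.
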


The \emph{fast escaping set} is a subset of the \emph{escaping set}, which is the set of points which tend to infinity on iteration. It is defined by
\begin{equation}
\label{Adef}
A(f) := \{x : \text{there exists } \ell \isnatural \text{ s.t. } |f^{k+\ell}(x)| \geq M^k(R, f), \text{ for } k \isnatural\}.
\end{equation}
Here $R > 0$ can be taken to be any value such that $M^k(R,f)\rightarrow\infty$ as $k\rightarrow\infty$. When $f$ is a quasiregular map of transcendental type, this definition was first used in \cite{MR3215194}, and it was shown there that $A(f)$ is independent of the choice of $R$.

\subsection{Quasi-Fatou components}
We use three results from \cite{SixsmithNicks1}, which concern the properties of quasi-Fatou components. The first is part of \cite[Theorem 1.3]{SixsmithNicks1}, the second is \cite[Corollary 5.2]{SixsmithNicks1} and the third is \cite[Theorem 1.4]{SixsmithNicks1}.
\begin{lemma}
\label{Lboundednottopconv}
Suppose that $f : \mathbb{R}^d \to \mathbb{R}^d$ is a quasiregular function of transcendental type and that $U_0$ is a quasi-Fatou component of $f$ which is bounded and hollow. Then each $U_k$ is bounded and hollow, and $\overline{U_k} \subset A(f)$. Furthermore $U_k$ is contained in a bounded complementary component of $U_{k+1}$ for all large $k$, and dist$(0, U_k) \rightarrow\infty$ as $k\rightarrow\infty$.
\end{lemma}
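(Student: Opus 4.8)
The plan is to prove this as the quasiregular analogue of Baker's Theorem~A, with the work split between a (more routine) topological part and a substantive quantitative part. The topological input is that non-constant quasiregular maps are open, discrete and sense-preserving, so that topological degree is available; the quantitative input is that the forward orbit of $U_0$ escapes to infinity fast enough to lie in $A(f)$, which I would obtain from the capacity estimates of Theorem~\ref{theo:condenser} and Lemmas~\ref{Lemma7.34}--\ref{Theorem7.263}, driven by the transcendental-type growth $M(R,f)/R\to\infty$ and by Lemma~\ref{Blemm}.

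For the topology I would use the following degree fact: for a bounded domain $D$, the function $y\mapsto\deg(f,D,y)$ is defined off the compact set $f(\partial D)$, vanishes on its unbounded complementary component (since $f(\overline D)$ is bounded), and is positive at every point of $f(D)$ (all local indices of a sense-preserving map being positive), so that $f(\overline D)\subseteq\hull(f(\partial D))$; if in addition $\partial D\subset U_k$, then $f(\partial D)\subset U_{k+1}$ encloses $f(\overline D)$ and hence $\hull(f(\partial D))\subseteq\hull(U_{k+1})$. Applying this with $\partial D$ a hypersurface inside $U_0$ enclosing a bounded complementary component $K$ of $U_0$ shows that $f(\partial K)$, a compact subset of $J(f)$ by complete invariance, lies in a bounded complementary component of $U_1$, so $U_1$ is hollow; boundedness of $U_1$ and then, by induction, of every $U_k$ (together with $f(\hull(U_k))\subseteq\hull(U_{k+1})$) is obtained in the same way as in Baker's proof, using also that (as in the complex case) a bounded hollow quasi-Fatou component is wandering, so that the argument is not obstructed by periodicity. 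Granting the escape estimate below, the nesting follows: $\hull(U_k)\subseteq\hull(U_{k+1})$ is connected and is disjoint from $U_{k+1}$ (a connected $U_{k+1}$ meeting both a complementary component of $U_k$ and the exterior of $\hull(U_k)$ would have to cross $\partial_{out}U_k\subset J(f)$), so it lies in a single bounded complementary component of $U_{k+1}$.

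The quantitative core, proved by induction on $k$, is the recursion: if $U_k$ has a bounded complementary component $H_k$ of $\sup$-radius $\rho_k$, then $U_{k+1}$ has one of $\sup$-radius $\rho_{k+1}\geq M(\rho_k',f)$ with $\rho_k'$ only slightly smaller than $\rho_k$. The mechanism: a hypersurface $\Gamma_k\subset U_k$ enclosing $H_k$, being homologically nontrivial around $H_k$, must enclose a ball of radius close to $\rho_k$ — hence the point where $|f|$ attains $M(\rho_k',f)$ — so, using $f(\hull(\Gamma_k))\subseteq\hull(f(\Gamma_k))$ and $f(\Gamma_k)\subset U_{k+1}$, the set $\hull(U_{k+1})$ reaches modulus $\geq M(\rho_k',f)$, and the degree fact again places a large complementary component of $U_{k+1}$ ``just inside''. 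The role of the capacity estimates (Theorem~\ref{theo:condenser} with \eqref{Keq} bounding the loss on one side, Lemma~\ref{Lemma7.34} with Lemma~\ref{Theorem7.263} providing a lower bound on the other, both applied to the ringlike condenser $\bigl(\operatorname{int}\hull(U_k),H_k\bigr)$) is to make the phrase ``only slightly smaller'' and the non-degeneracy of the new complementary component quantitative, and in particular uniform enough in $k$. Once the recursion holds, Lemma~\ref{Blemm} gives $\rho_k\to\infty$ with $\tfrac1k\log\log\rho_k\to\infty$; running the same estimates against the definition \eqref{Adef} of $A(f)$ yields $\overline{U_k}\subset A(f)$, indeed $\overline{U_k}\subset\{|x|\geq M^{m_k}(R,f)\}$ for some $m_k\to\infty$, which is exactly $\operatorname{dist}(0,U_k)\to\infty$.

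The step I expect to be the main obstacle is making this recursion precise: showing that a hypersurface in $U_k$ enclosing a ``hole'' of size $\rho_k$ is carried by $f$ onto a set reaching modulus comparable to $M(\rho_k,f)$, and controlling the capacity loss at each step finely enough (and uniformly enough in $k$) that the recursion still triggers Lemma~\ref{Blemm}, so that one recovers the fast-escaping rate \eqref{Adef} rather than mere escape. This is precisely where the transcendental-type hypothesis is indispensable, and where, with no Koebe-type distortion theorem available for $d\geq3$, one is forced to work through the condenser and conformal-invariant machinery (Theorem~\ref{theo:condenser}, Lemmas~\ref{l3}, \ref{Mohri}) instead of through classical complex-analytic estimates.
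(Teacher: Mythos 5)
First, a point of orientation: the paper does not prove this lemma at all. Lemma~\ref{Lboundednottopconv} is quoted verbatim from \cite[Theorem 1.3 and Corollary]{SixsmithNicks1}, so there is no in-paper argument to match; what matters is whether your sketch would actually establish the statement. The topological half of your plan is fine and is essentially Proposition~\ref{prop:BDF} together with Theorem~\ref{theo:boundedpreimage}(i): the degree argument giving $f(\overline D)\subset\hull(f(\partial D))$ and hence $f(\hull(U_k))\subset\hull(U_{k+1})$ is correct and is exactly how these inclusions are obtained in the literature.

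The genuine gap is in the quantitative core, which you yourself flag as the main obstacle but then justify with a false step. You claim that a hypersurface $\Gamma_k\subset U_k$ enclosing a bounded complementary component $H_k$ of sup-radius $\rho_k$ ``must enclose a ball of radius close to $\rho_k$ --- hence the point where $|f|$ attains $M(\rho_k',f)$.'' This does not follow: $\hull(\Gamma_k)$ contains the continuum $H_k$, which merely joins the origin (say) to a point of modulus $\rho_k$; such a set can be an arbitrarily thin tentacle, so $\hull(\Gamma_k)$ need not contain any sphere $\{|x|=\rho_k'\}$ with $\rho_k'$ comparable to $\rho_k$, and hence need not contain the point where $|f|$ attains its maximum on that sphere. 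Without this, $\max\{|y|:y\in\hull(f(\Gamma_k))\}$ is not bounded below by $M(\rho_k',f)$ and the recursion $\rho_{k+1}\geq M(\rho_k',f)$ never starts; the same defect recurs at the next step, since ``$\hull(U_{k+1})$ reaches modulus $M$'' gives no lower bound on the inradius about the origin of a complementary component of $U_{k+2}$, which is what the next application of $M(\cdot,f)$ requires. Converting ``the hull reaches far out'' into ``the next hole contains a large round ball'' is precisely where the condenser machinery must do real work (compare the proof of Theorem~\ref{theo:curvefambetterDan}, where the large multiplicity $N(f,D_k)\geq p$ feeds Theorem~\ref{theo:condenser} and Lemma~\ref{Lemma7.34} to force the ring between inner and outer boundaries to be large), and your sketch only gestures at this. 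A second, smaller issue: you invoke ``as in the complex case, a bounded hollow quasi-Fatou component is wandering.'' In the complex case that fact comes either from the normality-based classification of periodic Fatou components or as a consequence of the escape $U_k\to\infty$; neither is available to you here before the escape is proved, since normality plays no role in the definition of $QF(f)$, so as written this step is circular.
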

\begin{lemma}
\label{lemfull}
Suppose that $f : \mathbb{R}^d \to \mathbb{R}^d$ is a quasiregular function of transcendental type. Suppose that $U$ is a quasi-Fatou component of $f$, and $V$ is the quasi-Fatou component of $f$ containing $f(U)$. Then $U$ is full if and only if $V$ is full.
\end{lemma}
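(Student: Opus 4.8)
The assertion is equivalent to saying that $U$ is hollow if and only if $V$ is hollow, and throughout I use that a domain $G \subseteq \R^d$ is full exactly when $\widehat{\R^d} \setminus G$ is connected, i.e. when $\operatorname{cc}(G) = 1$, while in general $\operatorname{cc}(G)$ equals $1$ plus the number of bounded complementary components of $G$. For the set-up I record that, since $J(f)$ is completely invariant, both $f(QF(f)) \subseteq QF(f)$ and $f^{-1}(QF(f)) \subseteq QF(f)$; hence $f^{-1}(V)$ is an open subset of $QF(f)$ of which $U$ is a connected component, and every $x \in \partial U$ satisfies $f(x) \in \partial V \subseteq J(f)$, so that $f(\partial U)$ is disjoint from $V$. (I observe in passing that the lemma is a formal consequence of the first part of Theorem~\ref{theo:boundedhollowQFsimple}: taking $U = U_{k-1}$ and $V = U_k$, that part shows the sequence $(\operatorname{cc}(U_j))_j$ is non-increasing and eventually equals $2$ or $\infty$, hence is either identically $1$ or identically at least $2$. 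To keep things self-contained, and since the first implication below needs no restriction, I prefer to argue directly.)

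For the implication ``$U$ hollow $\Rightarrow V$ hollow'', let $L$ be a bounded complementary component of $U$; then $L$ is a compact component of the closed set $\R^d \setminus U$ and $\emptyset \neq \partial L \subseteq \partial U \subseteq J(f)$. A standard separation property of compact components of closed sets provides a bounded open set $Q$ with $L \subseteq Q$ and $\partial Q \subseteq U$. Fix $x_0 \in \partial L$ and put $y_0 := f(x_0)$; then $y_0 \in f(J(f)) \subseteq J(f)$, so $y_0 \notin V$, while $f(\partial Q) \subseteq f(U) \subseteq V$, so $y_0 \notin f(\partial Q)$. Since $f$ is non-constant quasiregular, hence sense-preserving, discrete and open, its topological degree satisfies $\deg(f, Q, y_0) = \sum_{x \in Q \cap f^{-1}(y_0)} i(x, f) \geq i(x_0, f) \geq 1$, whereas $f(\overline Q)$ is compact, so $\deg(f, Q, \cdot)$ vanishes on the unbounded component of $\R^d \setminus f(\partial Q)$. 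Thus the component $B$ of $\R^d \setminus f(\partial Q)$ containing $y_0$ is bounded; and since $\partial B \subseteq f(\partial Q) \subseteq V$ while $y_0 \notin V$, the complementary component of $V$ through $y_0$ cannot leave $B$ and so is bounded. Hence $V$ is hollow, and this step is valid for all $d \geq 2$.

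The reverse implication ``$V$ hollow $\Rightarrow U$ hollow'' I expect to be the main obstacle. When $U$ is bounded it is manageable: $f(U)$ is open and relatively closed in $V$ (a boundary point of $f(U)$ lying in $V$ would be an image of a point of $\partial U$, since $\overline U$ is compact there is no escape to infinity, hence would lie in $J(f)$, which is absurd), so $f(U) = V$, and one checks that $f|_U : U \to V$ is proper; a proper, non-constant, discrete, open map between domains of $\R^d$ is a branched covering of finite degree, and properness permits passing to end compactifications, so that $f|_U$ extends to a surjection carrying ends onto ends. Since the ends of a domain in $\R^d$ correspond bijectively to its complementary components in $\widehat{\R^d}$, this yields $\operatorname{cc}(U) \geq \operatorname{cc}(V)$, whence $\operatorname{cc}(V) \geq 2$ forces $\operatorname{cc}(U) \geq 2$; so $U$ is hollow. (A cohomological variant using transfer maps and Alexander duality would do the same.) When $U$ is unbounded this breaks down --- $f|_U$ need not be proper and $f(U)$ may omit finitely many asymptotic values of $V$ --- but if such a $U$ were hollow then it would be completely invariant, by \cite[Theorem~1.4]{SixsmithNicks1} as recalled in the introduction, so $f(U) \subseteq U$ and $V = U$ and there is nothing to prove. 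The remaining case, $U$ unbounded and full, I would attack by running the degree argument above ``in reverse'' about a putative bounded complementary component $K$ of $V$, using density of $f(U)$ in $V$. I expect the real work of the lemma to be concentrated here, and in the verification that the branched covering $f|_U$ respects the end structure --- equivalently that $f(\partial_{out} U) = \partial_{out} V$ --- where one must keep control of the possible wildness of the complementary components.
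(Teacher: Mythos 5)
This lemma is not proved in the paper at all: it is imported verbatim as \cite[Corollary 5.2]{SixsmithNicks1}, so there is no internal proof to compare against, and your attempt has to be judged on its own. Your forward direction (``$U$ hollow $\Rightarrow$ $V$ hollow'') is correct and complete: the separation of a bounded complementary component $L$ by a bounded open $Q$ with $\partial Q \subset U$ is exactly the device the paper itself uses (see the proof of Proposition~\ref{fullequalscc1}), the inclusion $\partial L \subset \partial U \subset J(f)$ is valid, and the degree argument pinning the complementary component of $V$ through $y_0$ inside a bounded component of $\R^d \setminus f(\partial Q)$ is sound. Note, though, that your parenthetical shortcut via Theorem~\ref{theo:boundedhollowQFsimple} would be circular in this paper, since the proof of that theorem invokes precisely this lemma in its first case.

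The reverse direction is where the gap lies, and you have correctly located but not closed it. For bounded $U$, the inequality $\operatorname{cc}(V) \leq \operatorname{cc}(U)$ that you want is exactly Theorem~\ref{theo:boundedpreimage}(\ref{th3.1:ccdecreases}) combined with Lemma~\ref{lemm:1a}, and citing those would be legitimate (their proofs do not use this lemma); but your own justification via end compactifications is not a proof --- the claimed bijection between ends of a domain in $\R^d$ and its complementary components in $\widehat{\R^d}$ is delicate for wild complements and is essentially the content one has to establish, which is why the paper proves Theorem~\ref{theo:boundedpreimage} by hand using Propositions~\ref{prop:BDF} and~\ref{prop:Rickman}. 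More seriously, the case ``$U$ unbounded and full, $V$ hollow'' is left entirely open: you only say you ``would attack'' it by reversing the degree argument. The sub-case where $V$ is unbounded is easy (Lemma~\ref{Lunboundednottopconv} makes $V$ completely invariant, so $U \subset f^{-1}(V) \subset V$ forces $U = V$, a contradiction), but when $V$ is \emph{bounded} and hollow your reversed degree argument has no obvious traction: $f|_U$ need not be proper, $f(U)$ need not equal $V$ or even be relatively closed in $V$, and a bounded complementary component of $V$ need not pull back to a bounded complementary component of $U$. Ruling out an unbounded full component mapping into a bounded hollow one is the substantive content of \cite[Corollary 5.2]{SixsmithNicks1}, and as it stands your proposal does not prove it.
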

\begin{lemma}
\label{Lunboundednottopconv}
Suppose that $f : \mathbb{R}^d \to \mathbb{R}^d$ is a quasiregular function of transcendental type. Suppose that $U$ is a quasi-Fatou component of $f$ which is unbounded and hollow. Then $U$ is completely invariant, its boundary components are bounded, and all other quasi-Fatou components of $f$ are full.
\end{lemma}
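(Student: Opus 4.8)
The plan is to prove, in turn, that $U$ is forward invariant, that $U$ is in fact the unique hollow quasi-Fatou component and is completely invariant, and finally that every component of $\partial U$ is bounded. Two facts drive the argument. First, by Lemma~\ref{lemfull} the property of being full is inherited both on passing from a quasi-Fatou component to the component containing its image and, over all preimage components, in the reverse direction; iterating, every quasi-Fatou component in the grand orbit of $U$ is hollow. Second, Lemma~\ref{Lboundednottopconv} controls any \emph{bounded} hollow quasi-Fatou component $V$ that might occur: its forward iterates $V_k$ are all bounded and hollow, $\overline{V_k}\subseteq A(f)$, $V_k$ lies in a bounded complementary component of $V_{k+1}$ for large $k$, and $\operatorname{dist}(0,V_k)\to\infty$. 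Alongside these I would use that $J(f)$ is closed, unbounded and completely invariant, that $A(f)$ is completely invariant, and the elementary facts that $f(\overline W)\subseteq\overline{f(W)}$ and that every complementary component $W$ of $U$ satisfies $\partial W\subseteq\partial U\subseteq J(f)$.

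First I would show that the quasi-Fatou component $V$ containing $f(U)$ equals $U$; note that $V$ is hollow. If $V$ were bounded, say $V=U_1\subseteq B(0,\rho)$, then $|f|\le\rho$ on $U$ and hence on $\overline U\supseteq\partial U$. From $f(\overline{U_k})\subseteq\overline{U_{k+1}}$ and complete invariance of $J(f)$ one gets $f(\partial U_k)\subseteq\partial U_{k+1}$, and hence by induction $f^k(\partial W)\subseteq\partial U_k$ for any complementary component $W$ of $U$. Applying Lemma~\ref{Lboundednottopconv} to $U_1$ gives $\operatorname{dist}(0,U_k)\to\infty$, so $\operatorname{dist}(0,\partial U_k)\to\infty$, while a short argument using backward invariance of $A(f)$ upgrades $\overline{U_k}\subseteq A(f)$ to hold for all $k\ge 0$. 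Taking a bounded complementary component $W$ of $U$ (which exists since $U$ is hollow), one has that $U$ ``surrounds'' the compact set $W$ in $\widehat{\R^d}$ while $f^k(\partial W)$ escapes to infinity; playing this escape and the nesting of the $U_k$ off against the surrounding configuration should contradict the boundedness of $U_1$. Hence $V$ is unbounded. One then rules out $V\ne U$ by a topological analysis of two disjoint unbounded hollow quasi-Fatou components: each lies in a single (necessarily unbounded) complementary component of the other, their separating boundaries lie in $J(f)$, and $f$ maps $U$ into $V$; combined with the constraints that Lemma~\ref{Lboundednottopconv} places on any bounded hollow component involved, this should force $V=U$, i.e.\ $f(U)\subseteq U$.

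The identical dichotomy handles backward invariance: a quasi-Fatou component $P\ne U$ with $f(P)\subseteq U$ would be hollow and would lie in one complementary component of $U$, and the bounded case is excluded as above while the unbounded case is excluded by the coexistence analysis, so $f^{-1}(U)\subseteq U$ and $U$ is completely invariant; the same dichotomy also shows $U$ is the only hollow quasi-Fatou component, so every other component is full. Finally, $\R^d\setminus U$ is unbounded (it contains $J(f)$), so $\partial U=\partial E_\infty\cup\bigcup_i\partial W_i$, where $E_\infty$ is the unbounded complementary component of $U$ and the $W_i$ are the bounded ones; each $\partial W_i$ is bounded, so it remains to bound the components of $\partial E_\infty$, which I would do using $f(\partial U)\subseteq\partial U$ and complete invariance of $U$: an unbounded component $\Gamma\subseteq\partial E_\infty\subseteq J(f)$ would be carried by $f$ into a component of $\partial U$ and, being incompatible with $U$ surrounding its bounded complementary components, cannot occur. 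The main obstacle throughout is topological: in $\R^d$ with $d\ge3$ there is no Jordan curve theorem or winding number, so ``surrounding'' and ``nesting'' of complementary components must be handled purely through separation properties of $\widehat{\R^d}$ and the fact that $\partial W\subseteq J(f)$; converting the escape of $f^k(\partial W)$ into a genuine contradiction in the bounded case, and making the coexistence analysis of two unbounded hollow components rigorous, are where I expect the real work to lie.
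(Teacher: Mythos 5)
First, a point of reference: the paper does not prove this lemma at all --- it is quoted verbatim from \cite[Theorem 1.4]{SixsmithNicks1} --- so there is no in-paper proof to compare against; your proposal has to stand on its own.

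It does not yet do so. The overall architecture (forward invariance, then complete invariance and uniqueness of the hollow component, then boundedness of the boundary components) is reasonable, and the auxiliary facts you assemble (Lemma~\ref{lemfull}, Lemma~\ref{Lboundednottopconv}, complete invariance of $J(f)$ and $A(f)$) are legitimately available. But every one of the three decisive steps ends with ``should contradict'', ``should force'' or ``cannot occur'' in place of an argument, and the one ingredient that actually makes these contradictions work is absent: the capacity-theoretic definition (\ref{Juliadef}) of $J(f)$, i.e.\ the blow-up property that $\R^d\setminus\bigcup_k f^k(G)$ has zero capacity for every neighbourhood $G$ of a Julia point. Concretely, in the case ``$U_1$ bounded'' the facts you collect --- $f$ bounded on $\overline{U}$, $f^k(\partial W)\subset\partial U_k$, $\operatorname{dist}(0,U_k)\to\infty$, nesting of the $U_k$ --- are mutually consistent and yield no contradiction: an entire function can be bounded on an unbounded connected set, and the escape of $f^k(\partial W)$ is exactly what the nesting already predicts. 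What closes this case is to take a bounded domain $G$ with $W\subset G$ and $\partial G\subset U$; since $W$ meets $J(f)$, (\ref{Juliadef}) forces $\bigcup_k f^k(G)$ to omit only a zero-capacity set, while for $k\geq 1$ one has $f^k(G)\subset\hull(U_k)$ (its boundary lies in $f^k(\partial G)\subset U_k$ and it is bounded) and $U\cap\hull(U_k)=\emptyset$ (the unbounded connected set $U$ lies in the unbounded complementary component of $U_k$); hence the unbounded open set $U\setminus G$ would have zero capacity, which is false. Analogous capacity arguments --- not topology --- are what exclude a second hollow component, whether bounded or unbounded, and what show that \emph{every} component of $\R^d\setminus U$ (not merely every bounded $W_i$, and not merely ``the'' unbounded one, of which there may be several) is bounded.

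Your proposed purely topological ``coexistence analysis'' cannot be completed as described: two disjoint unbounded hollow domains coexist perfectly well in $\R^d$, even in $\R^2$ (two disjoint half-spaces, each with a ball removed), so there is no separation-theoretic obstruction to be found; the obstruction is dynamical and enters only through (\ref{Juliadef}). Likewise your final step treats boundedness of the components of $\partial U$ as a consequence of $f(\partial U)\subset\partial U$ and a vague incompatibility with ``surrounding'', whereas the real mechanism is that all but a zero-capacity subset of $\R^d\setminus U$ is covered by the sets $\hull(f^k(G))$, each of which has its outer boundary in $U$, so every complementary component of $U$ is bounded. In short: the skeleton is plausible, but the load-bearing steps are missing, and they cannot be supplied by the tools you have restricted yourself to.
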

%
%
%
\section{Preimage components under a continuous, open, discrete map}
\label{S.topology}
In this section we give a number of preliminary results that are mostly topological in nature. The following is the main result of this section. Although this may not be entirely new, we have not been able to find a reference, apart from part of (\ref{th3.1:easythings}), which is \cite[Lemma I.4.7]{MR1238941}. If $f$ is an open map, $U \subset \R^d$ is a bounded domain, and $\partial f(U) = f(\partial U)$, then we say that $U$ is a \emph{normal domain} of $f$.
\begin{theorem}
\label{theo:boundedpreimage}
Suppose that $f : \R^d \to \R^d$ is a continuous, open and discrete map. Suppose that $U'$ is a domain, and $U$ is bounded component of $f^{-1}(U')$. Then:
\begin{enumerate}[(i)]
\item $U'$ is bounded, $f(U) = U'$, $f(\partial U) = \partial U'$, $U$ is a normal domain of $f$, $f(\partial_{out} U) = \partial_{out} U'$, and finally $f(\hull(U)) = \hull(U')$;\label{th3.1:easythings}
\item if $V$ is a bounded component of the complement of $U$, then $f(V)$ is a bounded component of the complement of $U'$;\label{th3.1:allofboundedcomponent} and 
\item we have $\operatorname{cc}(U') \leq$ $\operatorname{cc}(U)$, and if $\operatorname{cc}(U)=\infty$, then $\operatorname{cc}(U')=\infty$.\label{th3.1:ccdecreases}
\end{enumerate}
\end{theorem}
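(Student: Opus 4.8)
The plan is to prove the three parts in turn, deferring the two assertions of~(\ref{th3.1:easythings}) concerning $\hull(U)$ and $\partial_{out}U$ until after~(\ref{th3.1:allofboundedcomponent}), as these rest on the same key fact. For the first assertions of~(\ref{th3.1:easythings}): since $U$ is a component of the open set $f^{-1}(U')$, its boundary misses $f^{-1}(U')$, so $f(\partial U)\subseteq\partial U'$; and since $U$ is bounded, $\overline U$ is compact, so $f(\overline U)=\overline{f(U)}$ is compact, which forces $U'$ to be bounded once we know $f(U)=U'$. That $f(U)=U'$ and $f(\partial U)=\partial U'$ (so that $U$ is a normal domain) is essentially \cite[Lemma~I.4.7]{MR1238941}: $f(U)$ is open as $f$ is open, and a point of $U'\cap\partial f(U)$ would be the image of a point of $\overline U$ in neither $U$ nor $\partial U$, which is impossible; so $f(U)=U'$ by connectedness of $U'$, and the same compactness argument applied to points of $\partial U'$ gives $\partial U'\subseteq f(\partial U)$.

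The rest of the theorem hinges on the single assertion
\[
(\star)\qquad \hull(U)\ \text{is a bounded component of}\ f^{-1}(\hull(U')).
\]
Granting $(\star)$, applying the part of~(\ref{th3.1:easythings}) just proved, but with $\hull(U')$ in place of $U'$, yields at once that $f(\hull(U))=\hull(U')$, that $\hull(U)$ is a normal domain, and hence --- since $\partial\hull(U)=\partial_{out}U$ --- that $f(\partial_{out}U)=\partial\hull(U')=\partial_{out}U'$, completing~(\ref{th3.1:easythings}). (Here one also uses the elementary inclusion $\partial_{out}U\subseteq\partial U$: a point of $\partial\hull(U)$ must lie in $\overline U$, else a small ball about it would meet $\R^d\setminus U$ only and hence lie in the unbounded complementary component of $U$, contradicting that it is a limit of points of $\hull(U)$.)

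To prove $(\star)$, note that $\hull(U)$ is bounded, connected and contains $U$, and that any component $W\neq U$ of $f^{-1}(U')$, being connected and disjoint from $U$, is contained in a single complementary component of $U$; so $(\star)$ reduces to ruling out a component $W$ of $f^{-1}(U')$ lying ``nested'' inside a bounded complementary component $V$ of $U$ (such a $W$ being bounded, it would by~(\ref{th3.1:easythings}) be a normal domain with $f(W)=U'$, forcing $U'\subseteq f(V)$). I would obtain a contradiction by counting the sheets of $f$ over $U'$: $U$ and $W$ lie in one component of $f^{-1}(\hull(U'))$ (join them by an arc in the connected set $\hull(U)$ and push it forward), and, restricting $f$ to an appropriate bounded normal domain containing $U\cup W$, one has a proper, open, discrete map whose total multiplicity $\sum_{x\in f^{-1}(y)}i(x,f)$ is constant, by the normal-domain and local-index theory of \cite[Section~I.4]{MR1238941} --- with Lemma~\ref{lemma:branch} ensuring that $f(B_f)$ does not locally disconnect, so that $U'\setminus f(B_f)$ is connected; but over $U'$ this multiplicity receives independent positive contributions from both $U$ and $W$, and so exceeds its value over the bounded complementary components of $U'$, a contradiction. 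Making this multiplicity bookkeeping (and the choice of normal domain) precise is the step I expect to be the main obstacle; everything else is comparatively routine.

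Granting $(\star)$ and the completed~(\ref{th3.1:easythings}), the remaining parts go quickly. For~(\ref{th3.1:allofboundedcomponent}): if $V$ is a bounded complementary component of $U$ then $V$ is compact and connected, $\partial V\subseteq\partial U$ (the boundary of a component of a closed set lies in the boundary of the set), so $f(\partial V)\subseteq\partial U'$; by $(\star)$, $V\cap f^{-1}(U')=\emptyset$, so $f(V)$ is a compact connected subset of $\R^d\setminus U'$ which, as $V\subseteq\hull(U)$ and $f(\hull(U))=\hull(U')$, lies in a single bounded complementary component $V'$ of $U'$; a further short argument --- again applying~(\ref{th3.1:easythings}) to $f$ restricted to $\hull(U)$, now a normal domain --- shows $f$ does not map $V$ to a proper subset of $V'$, so $f(V)=V'$. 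For~(\ref{th3.1:ccdecreases}): since $U$ and $U'$ are bounded, $\operatorname{cc}(U)$ and $\operatorname{cc}(U')$ are one more than the numbers of bounded complementary components of $U$ and of $U'$, and by~(\ref{th3.1:allofboundedcomponent}) the map $V\mapsto f(V)$ carries the former family to the latter; it is surjective (given a bounded complementary component $V'$ of $U'$, any $x\in\hull(U)$ with $f(x)\in V'$ lies in a bounded complementary component $V$ of $U$ with $f(V)=V'$), giving $\operatorname{cc}(U')\le\operatorname{cc}(U)$, and it is finite-to-one, since if infinitely many bounded complementary components $V_j$ of $U$ satisfied $f(V_j)=V'$ then, picking $y^\ast\in V'$, the resulting infinitely many distinct points of $f^{-1}(y^\ast)$ --- one in each $V_j$ --- would accumulate in the compact set $\overline{\hull(U)}$, contradicting discreteness of $f$; hence $\operatorname{cc}(U)=\infty$ implies $\operatorname{cc}(U')=\infty$.
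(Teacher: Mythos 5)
Your proof of the first assertions of part~(\ref{th3.1:easythings}) is sound and runs close to the paper's (both essentially reproduce the argument behind \cite[Lemma I.4.7]{MR1238941}). Parts~(\ref{th3.1:allofboundedcomponent}) and~(\ref{th3.1:ccdecreases}), granting the hull statements, are also in the spirit of the paper's proof, though where you say a ``further short argument'' shows $f$ does not map $V$ onto a proper subset of $V'$, the paper invokes a specific tool: \cite[Lemma~I.4.8]{MR1238941} (Proposition~\ref{prop:Rickman} above), which says that $f$ maps each component of $D\cap f^{-1}(E)$ onto $E$ when $D$ is a normal domain and $E\subset f(D)$ is a continuum; your surjectivity of $V\mapsto f(V)$ in (\ref{th3.1:ccdecreases}) is exactly the paper's argument.

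The genuine gap is in your treatment of~$(\star)$, which is the crux. Two problems. First, the reduction is not valid as stated: even if no component $W\neq U$ of $f^{-1}(U')$ lies in a bounded complementary component $V$ of $U$, this does not by itself give $(\star)$. To see that $\hull(U)$ is a full component of $f^{-1}(\hull(U'))$ you need both (a) $f(\hull(U))\subset\hull(U')$ and (b) $f(\partial_{out}U)\cap\hull(U')=\emptyset$; your reduction addresses neither. In particular, the component $\Omega$ of $f^{-1}(\hull(U'))$ containing $U$ could a priori leak \emph{outward} across $\partial_{out}U$ (a point $x\in\partial_{out}U$ has $f(x)\in\partial U'$, which could lie in a bounded complementary component of $U'$ and hence in the open set $\hull(U')$, allowing nearby exterior points of $\hull(U)$ into $\Omega$); ruling this out is exactly the statement $f(\partial_{out}U)\subset\partial_{out}U'$, which you only obtain as a \emph{consequence} of $(\star)$. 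Second, the multiplicity argument you sketch is circular in the same way: to run it you need a bounded normal domain of $f$ containing $U$ and $W$ over which the weighted preimage count is constant, and the natural candidate ($\Omega$, or $\hull(U)$) being a bounded normal domain is essentially the content of $(\star)$. You flag this step as the main obstacle, and indeed it is not resolved.

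The paper's route avoids all of this by invoking Proposition~\ref{prop:BDF} (from Bergweiler--Drasin--Fletcher, \cite[Proposition~2.4]{MR3215194}), a purely topological lemma for continuous open maps: $f(\hull(U))\subset\hull(f(U))$ and $\partial\hull(f(U))\subset f(\partial\hull(U))$. Combined with the already-proved facts $f(U)=U'$ and $f(\partial U)=\partial U'$, this immediately yields $f(\partial_{out}U)=\partial_{out}U'$ (since $f(\partial_{out}U)$ is a connected subset of $\partial U'$ containing $\partial_{out}U'$) and then $f(\hull(U))=\hull(U')$; in particular $\hull(U)$ is a normal domain, which is precisely what you need downstream. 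Replacing your multiplicity argument for $(\star)$ by a direct appeal to Proposition~\ref{prop:BDF} would repair the proof and align it with the paper's.
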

To prove this result we need a number of preliminary results. The first is \cite[Proposition 2.4]{MR3215194}.
\begin{proposition}
\label{prop:BDF}
Suppose that $f : \R^d \to \R^d$ is a continuous open map, and that $U \subset \R^d$ is a bounded open set. Then 
\begin{equation}
\label{BDFeq}
f(\hull(U)) \subset \hull(f(U)) \quad\text{and}\quad \partial \hull(f(U)) \subset f(\partial \hull(U)).
\end{equation}
\end{proposition}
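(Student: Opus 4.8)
The plan is to reduce both inclusions of~(\ref{BDFeq}) to a single \emph{enclosure lemma}: \emph{if $g:\R^d\to\R^d$ is continuous and open and $\Omega\subset\R^d$ is bounded and open, then $g(\Omega)\subseteq\hull(g(\partial\Omega))$.} Granting this, here is how I would obtain the first inclusion. Note that $\hull(U)$ is open (it is the complement of the closed unbounded component of $\R^d\setminus U$), that $\hull$ is monotone on bounded sets (if $A\subseteq B$ then $\R^d\setminus B\subseteq\R^d\setminus A$, so the unbounded complementary component of $B$ lies in that of $A$, whence $\hull(A)\subseteq\hull(B)$), and that $\hull(U)=U\cup\bigcup_i C_i$ where the $C_i$ are the bounded components of $\R^d\setminus U$; each $C_i$ is compact and is a connected component of the set $\hull(U)\setminus U$, which is relatively closed in $\hull(U)$. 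For each $i$ I would choose a bounded open $V_i$ with $C_i\subseteq V_i$ and $\partial V_i\subseteq U$: this is a standard separation fact, obtained by taking a compact, relatively clopen neighbourhood $K_i$ of $C_i$ in $\hull(U)\setminus U$ (such a $K_i$ exists since $C_i$ is a compact component of a closed subset of the locally compact, locally connected set $\hull(U)$, so component and quasicomponent agree in a compact neighbourhood), and then a sufficiently thin open neighbourhood of $K_i$. Since $\partial V_i\subseteq U$ we get $f(\partial V_i)\subseteq f(U)$, so the enclosure lemma (with $g=f$, $\Omega=V_i$) and monotonicity give $f(C_i)\subseteq f(V_i)\subseteq\hull(f(\partial V_i))\subseteq\hull(f(U))$; as $f(U)\subseteq\hull(f(U))$ trivially, taking the union over $i$ yields $f(\hull(U))\subseteq\hull(f(U))$.

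For the second inclusion I would use the elementary observation that if $S\subset\R^d$ is bounded and open and $f(S)$ is open, then $\partial f(S)\subseteq f(\partial S)$: a point of $\partial f(S)$ is a limit of points $f(x_n)$ with $x_n\in S$, a subsequence of the $x_n$ converges in the compact set $\overline S$ to some $x$, and $x\notin S$ since $f(x)\notin f(S)$, so $x\in\partial S$. Apply this with $S=\hull(U)$. Writing $G$ for the unbounded component of $\R^d\setminus f(U)$, we have $\hull(f(U))=\R^d\setminus G$, $\partial\hull(f(U))=\partial G$, and a routine check gives $\partial G\subseteq\partial f(U)$. Hence if $y\in\partial\hull(f(U))$ then $y\in G$ (as $G$ is closed) and $y\notin f(U)$; by the first inclusion $f(\hull(U))\cap G=\emptyset$, so $y\notin f(\hull(U))$, whereas every neighbourhood of $y$ meets $f(U)\subseteq f(\hull(U))$, so $y\in\overline{f(\hull(U))}$. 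Therefore $y\in\partial f(\hull(U))\subseteq f(\partial\hull(U))$, which is the claim.

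It remains to prove the enclosure lemma. We may assume $\Omega\neq\emptyset$, so $g(\partial\Omega)$ is a nonempty compact set whose complement has a unique unbounded component $G$, with $\hull(g(\partial\Omega))=\R^d\setminus G$. Put $\Omega_0:=\Omega\cap g^{-1}(G)$, a bounded open set with $g(\Omega_0)\subseteq G$; it suffices to show $\Omega_0=\emptyset$. First, $g(\partial\Omega_0)\cap G=\emptyset$: a point $x\in\partial\Omega_0$ lies in $\overline\Omega=\Omega\cup\partial\Omega$, and if $x\in\partial\Omega$ then $g(x)\in g(\partial\Omega)$, which is disjoint from $G$, while if $x\in\Omega$ then $x\notin\Omega_0$ forces $x\notin g^{-1}(G)$, that is $g(x)\notin G$. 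Now suppose $\Omega_0\neq\emptyset$. Then $g(\Omega_0)$ is a nonempty subset of the connected set $G$; it is open because $g$ is open; and it is closed in $G$, since if $g(x_n)\to z\in G$ with $x_n\in\Omega_0$ then a subsequence of the $x_n$ converges in $\overline{\Omega_0}$ to a point $x$ with $g(x)=z$, and $x\notin\partial\Omega_0$ (because $g(\partial\Omega_0)\cap G=\emptyset$), so $x\in\Omega_0$ and $z\in g(\Omega_0)$. A nonempty subset of a connected set that is both open and closed is the whole set, so $g(\Omega_0)=G$; but $g(\Omega_0)\subseteq g(\overline\Omega)$ is bounded while $G$ is unbounded, a contradiction. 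Hence $\Omega_0=\emptyset$, i.e.\ $g(\Omega)\cap G=\emptyset$, which is the enclosure lemma.

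The crux is the enclosure lemma, and within it the step that $g(\Omega_0)$ is open in $G$: here the hypothesis that $f$ is open is essential and cannot be dropped, since a continuous map that crushes an annulus to a point while carrying the disc it bounds far away violates the first inclusion of~(\ref{BDFeq}). The other point needing care is the construction of the separating sets $V_i$, which rests on the coincidence of components and quasicomponents for compact subsets of compact Hausdorff spaces; the remainder is routine manipulation of topological hulls and complementary components.
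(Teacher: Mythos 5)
Your proof is correct. Note that the paper does not prove this proposition at all: it is quoted verbatim from Bergweiler--Drasin--Fletcher \cite[Proposition 2.4]{MR3215194}, so there is no in-paper argument to compare against; what you have supplied is a valid self-contained proof along the standard lines. The enclosure lemma (that $g(\Omega)\subseteq\hull(g(\partial\Omega))$ for $g$ continuous and open and $\Omega$ bounded and open, proved via the clopen-in-$G$ argument) is exactly the right engine, and your derivations of both inclusions from it check out, including the reduction of $\partial\hull(f(U))\subseteq f(\partial\hull(U))$ to the elementary fact $\partial f(S)\subseteq f(\partial S)$ for bounded open $S$ with $f(S)$ open. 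The only step whose justification is somewhat telegraphic is the construction of the bounded open sets $V_i$ with $C_i\subseteq V_i$ and $\partial V_i\subseteq U$; your appeal to the coincidence of components and quasicomponents is sound, though it is cleaner to note that $C_i$ is a component of the compact set $(\R^d\setminus U)\cap\overline{B}$ for a large closed ball $B$, apply the \v{S}ura-Bura lemma there, and take an $\epsilon$-neighbourhood of the resulting relatively clopen compact set. This separation fact is in any case already part of the paper's toolkit: it is invoked in the proof of Proposition~\ref{fullequalscc1} via \cite[Theorem 1.4]{SixsmithNicks1}.
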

The second is \cite[Lemma I.4.8]{MR1238941}.
\begin{proposition}
\label{prop:Rickman}
Suppose that $f : \R^d \to \R^d$ is a continuous, open and discrete map, that $U \subset \R^d$ is a normal domain of $f$, and that $E \subset f(U)$ is a continuum. Then $f$ maps every component of $U \cap f^{-1}(E)$ onto $E$.
\end{proposition}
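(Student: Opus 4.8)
The plan is to fix a component $C$ of $U\cap f^{-1}(E)$ and show that $f(C)$ is a non-empty subset of $E$ that is both closed and open in $E$; since $E$ is connected this forces $f(C)=E$. The first, easy, step is to note that $C$ is compact. Indeed, $f^{-1}(E)$ is closed (as $f$ is continuous), so $\overline C\subseteq\overline U\cap f^{-1}(E)$, and a point $x\in\overline C\cap\partial U$ would have $f(x)\in f(\partial U)=\partial f(U)$ by normality of $U$ while also $f(x)\in E\subseteq f(U)$, which is impossible because the open set $f(U)$ is disjoint from $\partial f(U)$. Hence $\overline C\subseteq U\cap f^{-1}(E)$, and since a component is closed in its ambient set, $C=\overline C$ is compact (the same argument shows that $U\cap f^{-1}(E)$ is itself compact). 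Consequently $f(C)$ is a non-empty subcontinuum of $E$, and in particular closed in $E$, so it remains only to prove that $f(C)$ is open in $E$.

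For openness, fix $y_0\in f(C)$; I want a neighbourhood of $y_0$ in $E$ contained in $f(C)$. Since $f$ is discrete and $C$ is compact, $f^{-1}(y_0)\cap C=\{x_1,\dots,x_m\}$ is finite. Using the fact that a continuous, open, discrete map admits a normal neighbourhood at every point (see \cite{MR1238941}), choose pairwise disjoint normal domains $V_1,\dots,V_m$ of $f$ with $x_i\in V_i$, $\overline{V_i}\subseteq U$ and $f^{-1}(y_0)\cap\overline{V_i}=\{x_i\}$. The compact set $K:=C\setminus\bigcup_i V_i$ misses $f^{-1}(y_0)$, so $y_0\notin f(K)$, and we may fix an open neighbourhood $W$ of $y_0$ with $W\subseteq\bigcap_i f(V_i)$ and $W\cap f(K)=\emptyset$. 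Now take a subcontinuum $E'$ of $E$ with $y_0\in E'\subseteq W$ (for instance the component of $E\cap\overline{B}(y_0,\rho)$ through $y_0$ for small $\rho$, the case $E=\{y_0\}$ being trivial). Because $E'\subseteq W$ avoids $f(K)$, every point of $C\cap f^{-1}(E')$ lies in $\bigcup_i V_i$; the sets $V_i\cap f^{-1}(E')$ are disjoint and relatively clopen in $C\cap f^{-1}(E')$; and a maximality argument (a connected subset of $V_i\cap f^{-1}(E')\subseteq U\cap f^{-1}(E)$ that meets $C$ lies in $C$) identifies the component $D_i$ of $C\cap f^{-1}(E')$ through $x_i$ with the component of $V_i\cap f^{-1}(E')$ through $x_i$; in particular $D_i\subseteq C$.

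The proof then reduces to the following local statement, which I expect to be the main obstacle: \emph{if $V$ is a normal domain of $f$ with $f^{-1}(y_0)\cap\overline V=\{x_0\}$, and $E'\subseteq f(V)$ is a continuum containing $y_0$, then the component of $V\cap f^{-1}(E')$ through $x_0$ maps onto $E'$.} Here $f\colon V\to f(V)$ is a branched covering: from the theory of normal domains the local degree $\sum_{x\in f^{-1}(y)\cap V} i(x,f)$ is a positive integer, constant on the connected set $f(V)$ and hence equal to $i(x_0,f)$, so already every point of $E'$ has a preimage somewhere in $V$. I would prove the statement by induction on $i(x_0,f)$: the base case $i(x_0,f)=1$ is immediate, for then $f|_V$ is a homeomorphism onto $f(V)$ and $V\cap f^{-1}(E')$ is a homeomorphic copy of $E'$, hence connected; for the inductive step one picks a point $y_1\in E'$ none of whose preimages in $V$ is a branch point — possible away from the branch values $f(B_f\cap V)$, which have topological dimension at most $d-2$ by Lemma~\ref{lemma:branch}, so that when $d=2$ they cannot exhaust a non-degenerate continuum, the general $d$ needing an extra argument — then passes to strictly smaller disjoint normal neighbourhoods of those preimages, applies the induction hypothesis there, and glues along the finitely many preimages of $y_1$ in the relevant component. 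The delicate point, and the reason I flag this as the crux, is controlling how the a priori badly branching preimage of a continuum decomposes inside a normal domain; this is exactly where the branched-covering structure and the dimension bound on $B_f$ are indispensable, and where the higher-dimensional case genuinely departs from the planar one in which $B_f$ is discrete.

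Granting the local statement, each $D_i$ satisfies $f(D_i)=E'$ with $D_i\subseteq C$, whence $E'\subseteq f(C)$; combining this, as $\rho$ and the choices vary, with a boundary-bumping argument for continua, one deduces that $f(C)$ contains a neighbourhood of $y_0$ in $E$. Thus $f(C)$ is open in $E$; being also non-empty and closed in the connected set $E$, it equals $E$. The argument uses only that $f$ is continuous, open and discrete, and applies simultaneously to every component of $U\cap f^{-1}(E)$.
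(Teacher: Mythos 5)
The paper offers no proof of this proposition at all: it is quoted directly from Rickman's book as Lemma~I.4.8, so the only comparison available is between your argument and that reference. Your overall skeleton (show $f(C)$ is non-empty, closed and open in the connected set $E$) is right, and the compactness/closedness half is correct. But the openness half has two genuine gaps. First, the ``local statement'' that you yourself flag as the crux is not proved. The base case $i(x_0,f)=1$ is fine, but the inductive step is described only as ``glues along the finitely many preimages of $y_1$ in the relevant component'' --- and that gluing \emph{is} the original problem in miniature: you must show that the component of $V\cap f^{-1}(E')$ through $x_0$ actually reaches a preimage of $y_1$, which is exactly the ``does the component spread over all of $E'$'' question you set out to answer. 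Moreover, your choice of $y_1\in E'$ avoiding $f(B_f\cap V)$ is only justified when $d=2$; for $d\geq 3$ the bound $\dim f(B_f)\leq d-2$ from Lemma~\ref{lemma:branch} does not prevent a nondegenerate continuum from lying entirely inside $f(B_f\cap V)$, as you concede. Second, even granting the local statement, the final deduction of openness fails when $E$ is not locally connected at $y_0$: you only obtain $E'_\rho\subset f(C)$, where $E'_\rho$ is the component of $E\cap\overline{B}(y_0,\rho)$ containing $y_0$. If $E$ is a topologist's sine curve and $y_0$ lies on the limit segment, then every $E'_\rho$ is contained in the limit segment, so $\bigcup_\rho E'_\rho$ is not a neighbourhood of $y_0$ in $E$. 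Boundary bumping tells you that $E'_\rho$ reaches $\partial B(y_0,\rho)$; it does not tell you that $E'_\rho$ absorbs all of $E$ near $y_0$.

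Both difficulties can be bypassed by using the compactness of $E$, reducing to the case of open connected sets, which is covered by Theorem~\ref{theo:boundedpreimage}(i) (whose proof in the paper does not rely on this proposition). Write $E=\bigcap_n G_n$, where $G_n$ is the open $\tfrac1n$-neighbourhood of $E$; each $G_n$ is connected, and $\overline{G_n}\subset f(U)$ for all large $n$. The same boundary computation you used for compactness shows that any component $W$ of $U\cap f^{-1}(G_n)$ has $\overline{W}\subset U$ and is in fact a bounded component of $f^{-1}(G_n)$, whence $f(W)=G_n$. Now let $W_n$ be the component of $U\cap f^{-1}(G_n)$ containing $C$. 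Then $W_{n+1}\subset W_n$, the nested intersection $\bigcap_n\overline{W_n}$ is a continuum contained in $U\cap f^{-1}(E)$ and containing $C$, hence equals $C$ by maximality of components; and for each $y\in E$, points $x_n\in W_n\cap f^{-1}(y)$ have a limit point in $\bigcap_n\overline{W_n}=C$. This yields $f(C)=E$ with no analysis of the branch set and no local connectivity issues, and is the kind of argument underlying Rickman's lemma.
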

We also use the following elementary proposition.
\begin{proposition}
\label{fullequalscc1}
Suppose that $U$ is a proper subdomain of $\R^d$. Then $U$ is full if and only if $\operatorname{cc}(U)= 1$.
\end{proposition}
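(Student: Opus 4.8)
The plan is to pass to the one-point compactification: set $F := \widehat{\R^d}\setminus U$, a nonempty compact Hausdorff space which contains $\infty$ and satisfies $F\cap\R^d = \R^d\setminus U$ (a nonempty closed proper subset of $\R^d$, since $U$ is a proper subdomain), so that by definition $\operatorname{cc}(U)$ is the number of connected components of $F$. Two standard facts about a compact Hausdorff space $Y$ will do the work: every component of $Y$ is closed in $Y$ (hence compact); and if $C$ is a component of $Y$ and $O\subseteq Y$ is open with $C\subseteq O$, then there is a set $D$, clopen in $Y$, with $C\subseteq D\subseteq O$ (this is the coincidence of components with quasicomponents in the compact Hausdorff setting). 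I shall also use the trivial observation that a component of $F$ disjoint from $\{\infty\}$ is automatically a component of $\R^d\setminus U$, since the component of $\R^d\setminus U$ containing it is a connected subset of $F$ and so cannot be larger.

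For the implication that fullness of $U$ forces $\operatorname{cc}(U)=1$, I argue by contradiction. If $\operatorname{cc}(U)\ge 2$ then $F$ has a component $C'$ with $\infty\notin C'$; by the observation above $C'$ is a complementary component of $U$, and since $C'$ is closed in the compact space $F$ it is compact, hence a \emph{bounded} subset of $\R^d$, contradicting fullness. Hence $\operatorname{cc}(U)=1$.

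For the reverse implication, assume $F$ is connected and suppose for contradiction that $\R^d\setminus U$ has a bounded component $C$; fix an open ball $B=B(0,R)\supseteq C$. In the compact Hausdorff space $X:=(\R^d\setminus U)\cap\overline{B}$ one checks that $C$ is again a component (the $X$-component of $C$ is a connected subset of $\R^d\setminus U$, hence contained in $C$) and that $X\cap B$ is an open neighbourhood of $C$ in $X$; so the second standard fact yields a set $D$, clopen in $X$, with $C\subseteq D\subseteq X\cap B$. It then remains to promote $D$ to a clopen subset of $F$: being closed in $X$ and bounded, $D$ is closed in $\R^d$, hence closed in $\widehat{\R^d}$ and in $F$; and writing $D=W\cap X$ with $W$ open in $\R^d$ and $W\subseteq B$ gives $D=W\cap(\R^d\setminus U)$, so $U\cup D=U\cup W$ is open in $\R^d$, whence $F\setminus D=\widehat{\R^d}\setminus(U\cup D)$ is closed and $D$ is open in $F$. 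Thus $D$ is a clopen subset of $F$ that is nonempty (it contains $C$) and proper (it omits $\infty$), contradicting the connectedness of $F$; so $U$ is full.

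The only genuine subtlety — and the reason this "elementary" proposition deserves an argument — is that complementary components of an open set need not be relatively open, let alone clopen, in $\R^d\setminus U$ (comb-type complements illustrate this), so one cannot simply declare $C$ clopen in $F$. Compactness is what rescues the situation: after compactifying, and in the harder direction also truncating to a large ball so that the relevant component lies in the interior, the component/quasicomponent coincidence supplies the separating clopen set, and the remaining transfer of relative clopenness from $X$ to $F$ is the routine bookkeeping sketched above. I expect this transfer step, rather than any single idea, to be the most error-prone part to write out carefully.
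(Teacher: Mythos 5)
Your argument is correct. The easy direction (full $\Rightarrow$ $\operatorname{cc}(U)=1$) is the same as in the paper, though you spell out the details the paper leaves implicit: a component of $\widehat{\R^d}\setminus U$ avoiding $\infty$ is closed in a compact space, hence a bounded complementary component of $U$ in $\R^d$. For the harder direction the routes diverge. The paper takes the bounded component $E$ of $\R^d\setminus U$ and invokes a construction from \cite{SixsmithNicks1} producing a bounded domain $G$ with $E\subset G$ and $\partial G\subset U$; the Jordan-type separation of $E$ from $\infty$ then follows at once. You instead work intrinsically in the compact Hausdorff space $X=(\R^d\setminus U)\cap\overline{B}$ and use the coincidence of components with quasicomponents (the \v{S}ura-Bura lemma) to manufacture a clopen set $D$ with $C\subseteq D\subseteq X\cap B$, then transfer relative clopenness from $X$ to $\widehat{\R^d}\setminus U$. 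The transfer step is carried out correctly: $D$ is compact hence closed in $F$, and writing $D=W\cap(\R^d\setminus U)$ with $W$ open and $W\subseteq B$ shows $U\cup D$ is open, so $D$ is open in $F$ as well. What your approach buys is self-containedness --- it avoids the external reference and makes clear exactly which point-set fact is doing the work --- at the cost of being longer; what the paper's approach buys is brevity, since the separating domain $G$ immediately yields the disconnection without any discussion of quasicomponents. Your closing remark about why the statement is not a triviality (complementary components need not be relatively clopen) is accurate and worth keeping.
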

\begin{proof}
Recall that $\operatorname{cc}(U)$ denotes the number of components of $\widehat{\R^d} \setminus U$. First suppose that $\operatorname{cc}(U) > 1$. Then $\widehat{\R^d} \setminus U$ has a bounded component, and so $U$ is hollow. 

On the other hand, suppose instead that $U$ is hollow, and so $\R^d \setminus U$ has a bounded component, $E$ say. As in the proof of \cite[Theorem 1.4]{SixsmithNicks1}, there is a bounded domain, $G$, such that $E \subset G$ and $\partial G \subset U$. It follows that $E$ and $\infty$ are in different components of $\widehat{\R^d} \setminus U$. Hence  $\widehat{\R^d} \setminus U$ is not connected, and so $\operatorname{cc}(U) > 1$.
\end{proof}
We can now prove Theorem~\ref{theo:boundedpreimage}.
\begin{proof}[Proof of Theorem~\ref{theo:boundedpreimage}]
Suppose that $f : \R^d \to \R^d$ is a continuous, open and discrete map, that $U'$ is a domain, and that $U$ is bounded component of $f^{-1}(U')$. We prove the three parts of Theorem~\ref{theo:boundedpreimage} in order.

First we prove (\ref{th3.1:easythings}). Since $f$ is an open map and $U$ is bounded, $\partial f(U) \subset f(\partial U)$. Since $f$ is continuous we also have
\begin{equation}
\label{partialUeq1}
f(\partial U) \subset f(\overline{U}) \subset \overline{f(U)}.
\end{equation}

Suppose that $x \in \partial U$. If $f(x) \in U'$, then $x$ lies in a component of $f^{-1}(U')$, contradicting the fact that $x$ lies on the boundary of a component of $f^{-1}(U')$. Hence $f(\partial U) \cap U' = \emptyset$ and, in particular, $f(\partial U) \cap f(U) = \emptyset$.

We deduce from (\ref{partialUeq1}) that
$$
f(\partial U) \subset \overline{f(U)}\backslash f(U) = \partial f(U),
$$
and it follows that $\partial f(U) = f(\partial U)$, and so $U$ is a normal domain for $f$. Clearly $f(U) \subset U'$. If $f(U) \ne U'$, then there exists $x \in \partial f(U) \cap U' = f(\partial U) \cap U' = \emptyset$. We deduce that $f(U) = U'$, that $f(\partial U) = \partial U'$, and that $U'$ is bounded.

Next, we note by Proposition~\ref{prop:BDF} that
$$
f(\partial_{out} U) = f(\partial \hull(U)) \supset \partial \hull(f(U)) = \partial \hull(U') = \partial_{out} U'.
$$
Since $f(\partial_{out} U)$ is connected and contained in $\partial U'$, we conclude that $f(\partial_{out} U)~=~\partial_{out} U'$.

Since $f(\hull(U)) \subset \hull(U')$ by Proposition~\ref{prop:BDF}, and 
$$
\partial f(\hull(U)) \subset f(\partial \hull(U)) = f(\partial_{out} U) = \partial_{out} U',
$$
we conclude that $f(\hull(U)) = \hull(U')$. This completes the proof of (\ref{th3.1:easythings}). \\

For the proof of (\ref{th3.1:allofboundedcomponent}), let $V$ be a bounded component of the complement of $U$. Since $\partial f(V) \subset f(\partial V)$ and $f(\partial V) \cap U' = \emptyset$, it follows that $\partial f(V) \cap U' = \emptyset$. Also, by Proposition~\ref{prop:BDF} and by (\ref{th3.1:easythings}),
$$
f(V) \subset f(\hull(U)) \subset \hull(f(U)) = \hull(U').
$$
Since $f(V)$ is closed, it follows that $f(V)$ does not contain $U'$. Furthermore, since $f(\partial U) \cap U' = \emptyset$, we have  $f(V) \cap U' = \emptyset$. Hence $f(V)$ is contained in a bounded component of the complement of $U'$, say $V'$.

Note that $V' \subset \hull(U')$ is a continuum, and also that $V$ is a component of $\hull(U) \cap f^{-1}(V')$. Hence, by Proposition~\ref{prop:Rickman} and (\ref{th3.1:easythings}), $f(V) = V'$, as required. \\

To prove the first part of (\ref{th3.1:ccdecreases}), suppose that $V'$ is a bounded complementary component of $U'$. It follows from (\ref{th3.1:easythings}) and (\ref{th3.1:allofboundedcomponent}) that there is a bounded complementary component of $U$, say $V$, such that $V' = f(V)$. We note that $U$ is bounded and so $\operatorname{cc}(U)$ is one more than the number of bounded complementary components of $U$. The analogous remark holds for $U'$. The result follows.

For the second part of (\ref{th3.1:ccdecreases}), suppose, by way of contradiction, that $\operatorname{cc}(U)=\infty$ and that $\operatorname{cc}(U')$ is finite. It follows from (\ref{th3.1:easythings}) and (\ref{th3.1:allofboundedcomponent}) that there is a bounded component of the complement of $U'$, say $V'$, such that $V'$ is the image of infinitely many bounded components of the complement of $U$. Choose $y \in V'$. It follows by (\ref{th3.1:allofboundedcomponent}) that $f^{-1}(y) \cap \hull(U)$ is infinite and bounded, and hence has an accumulation point in $\R^d$, which is impossible since $f$ is discrete.
\end{proof}
The application of Theorem~\ref{theo:boundedpreimage} in our context is achieved using the following.
\begin{lemma}
\label{lemm:1a}
Suppose that $f : \R^d \to \R^d$ is quasiregular, that $U$ is a bounded quasi-Fatou component of $f$ and that $U'$ is the quasi-Fatou component of $f$ containing $f(U)$. Then $U$ is a bounded component of $f^{-1}(U')$.
\end{lemma}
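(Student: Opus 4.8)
The plan is to show that the component of $f^{-1}(U')$ containing $U$ is exactly $U$, and that this component is bounded. Write $W$ for the component of $f^{-1}(U')$ that contains $U$; since $f(U) \subset U'$ we certainly have $U \subset W$. First I would argue $W \subset QF(f)$: because $U'$ is a quasi-Fatou component, hence open and forward-invariant in the sense that $f(U') \subset U''$ for the next quasi-Fatou component, the complete invariance of $J(f)$ gives $f^{-1}(QF(f)) \subset QF(f)$, so $f^{-1}(U') \subset QF(f)$ and in particular $W \subset QF(f)$. Since $W$ is connected and contained in the quasi-Fatou set, it lies inside a single quasi-Fatou component; as $U \subset W$, that component must be $U$ itself, giving $W \subset U$ and hence $W = U$. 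Thus $U$ is a component of $f^{-1}(U')$.

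It remains to check that $U$, which is assumed bounded, genuinely arises as a \emph{bounded} component of $f^{-1}(U')$ — but this is immediate from the hypothesis that $U$ is bounded, so no further work is needed there. The one point that requires a little care is the claim $f^{-1}(QF(f)) \subset QF(f)$, equivalently $f^{-1}(J(f)^c) \subset J(f)^c$, i.e. $x \notin J(f) \Rightarrow f(x) \notin J(f)$ is \emph{not} what we want; rather we need $f(x) \notin J(f) \Rightarrow x \notin J(f)$, which is precisely the statement that $J(f)$ is backward invariant, part of complete invariance: if $f(x) \in QF(f)$ then $f(x) \notin J(f)$, and complete invariance of $J(f)$ says $f^{-1}(J(f)) \subset J(f)$... more directly, $x \in J(f)$ together with complete invariance gives $f(x) \in J(f)$, so contrapositively $f(x) \in QF(f) \Rightarrow x \in QF(f)$. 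This is exactly backward invariance of $QF(f)$, and it is recorded in the excerpt that $QF(f)$ is completely invariant.

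The main (mild) obstacle is simply making sure the quasi-Fatou component containing $f(U)$ is well-defined and that "forward invariance" is used correctly: the convention $U_1$ = the quasi-Fatou component containing $f(U_0)$ is exactly set up for this, so $U'$ in the statement is this $U_1$, and $f(U) \subset U'$ by definition. Once that is in hand, the argument is the short topological observation above: $W$ is connected, lies in $QF(f)$, and contains the quasi-Fatou component $U$, forcing $W = U$. I would present it in essentially three lines after quoting complete invariance of $QF(f)$.
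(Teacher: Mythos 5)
Your argument is correct and is essentially the paper's own: both identify the component $W$ of $f^{-1}(U')$ containing $U$, use complete (backward) invariance of the quasi-Fatou set to get $W \subset QF(f)$, and conclude $W = U$ from connectedness (the paper phrases the last step as $\partial U \cap W \ne \emptyset$ being impossible since $\partial U \subset J(f)$, while you note that the connected set $W$ must lie in a single quasi-Fatou component, which must be $U$). Boundedness is, as you say, immediate from the hypothesis.
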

\begin{proof}
Clearly $U$ is contained in a component, $W$ say, of $f^{-1}(U')$. If $U \ne W$, then $\partial U \cap W \ne \emptyset$. This is a contradiction, since $\partial U \subset J(f)$ and $W \subset QF(f)$.
\end{proof}
%
%
%
\section{Proof of Theorem~\ref{theo:boundedhollowQFsimple}}
\label{S:boundedhollowQFsimple}
We first prove the following, which is also used elsewhere in this paper.
\begin{lemma}
\label{lemm:boundedhollowQF}
Suppose that $f : \R^d \to \R^d$ is a quasiregular map of transcendental type, and that $U_0$ is a bounded, hollow quasi-Fatou component of $f$. 
Then, for $k\geq 0$,
\begin{equation}
\label{basic1}
U_k = f^k(U_0) \quad\text{and}\quad \hull(U_k) = f^k(\hull(U_0)).
\end{equation} 
Also, there exists $k'\isnatural$ such that,
\begin{equation}
\label{inb1}
f(\hullo(U_k)) = \hullo(U_{k+1}), \qfor k\geq k'.
\end{equation}
In addition 
\begin{equation}
\label{cc}
\operatorname{cc}(U_{k}) \leq \operatorname{cc}(U_{k-1}), \qfor k\isnatural.
\end{equation}
If $1<\operatorname{cc}(U_0)<\infty$, then $\operatorname{cc}(U_k)=2$, for all sufficiently large $k$. Finally, if $\operatorname{cc}(U_0)=\infty$, then $\operatorname{cc}(U_k)=\infty$, for $k\isnatural$.
\end{lemma}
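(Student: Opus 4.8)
The plan is to extract the whole statement from Theorem~\ref{theo:boundedpreimage}, which applies at each stage because Lemma~\ref{Lboundednottopconv} makes every $U_k$ bounded and hollow and Lemma~\ref{lemm:1a} then exhibits $U_k$ as a bounded component of $f^{-1}(U_{k+1})$. Taking $U=U_k$ and $U'=U_{k+1}$, part~(\ref{th3.1:easythings}) gives $f(U_k)=U_{k+1}$ and $f(\hull(U_k))=\hull(U_{k+1})$, and an induction on $k$ turns these into (\ref{basic1}); part~(\ref{th3.1:ccdecreases}) gives (\ref{cc}); and the second clause of part~(\ref{th3.1:ccdecreases}), applied inductively, shows that $\operatorname{cc}(U_0)=\infty$ forces $\operatorname{cc}(U_k)=\infty$ for every $k$.

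Now suppose $1<\operatorname{cc}(U_0)<\infty$. By Proposition~\ref{fullequalscc1} each $\operatorname{cc}(U_k)\geq 2$ (the $U_k$ being hollow), so by (\ref{cc}) the integer sequence $(\operatorname{cc}(U_k))$ decreases to some $\nu\geq 2$, and it remains to show $\nu=2$. Fix $k_1$ beyond which $\operatorname{cc}(U_k)=\nu$ and the nesting of Lemma~\ref{Lboundednottopconv} holds. For $k>k_1$ let $P_k$ be the bounded complementary component of $U_k$ containing $U_{k-1}$; since a complementary component of a domain in $\widehat{\R^d}$ has connected complement, $P_k$ in fact contains $\hull(U_{k-1})$ and satisfies $\hull(P_k)=P_k$. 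By part~(\ref{th3.1:allofboundedcomponent}), $f$ maps each bounded complementary component of $U_k$ onto one of $U_{k+1}$, and (as in the proof of part~(\ref{th3.1:ccdecreases})) this correspondence is onto, hence --- $\nu$ being finite --- a bijection; as $f(P_k)\supseteq f(U_{k-1})=U_k$, it carries $P_k$ to $P_{k+1}$ and permutes the remaining, ``peripheral'', components.

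The crux is to rule out $\nu\geq 3$, i.e.\ to show that peripheral components cannot persist. The plan here is: every complementary component of $U_k$ lies in $\hull(U_k)$, and $\hull(U_k)\subseteq P_{k+1}$ because $\widehat{\R^d}\setminus P_{k+1}$ is connected, contains $\infty$, and misses $U_k$; thus each peripheral component of $U_k$ lies inside the single central component $P_{k+1}$ of $U_{k+1}$ while being disjoint from $P_k$. Applying Proposition~\ref{prop:Rickman} to the normal domain $\hull(U_{k-1})$ (normal by (\ref{th3.1:easythings})) and a peripheral component $Q$ of $U_k$, a continuum contained in $\hull(U_k)=f(\hull(U_{k-1}))$, every component of $\hull(U_{k-1})\cap f^{-1}(Q)$ maps onto $Q$, sits in a bounded complementary component of $U_{k-1}$, and --- its image being peripheral, not $P_k$ --- that component is itself peripheral. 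Iterating produces, for each large $k$, a peripheral component of $U_k$ threaded back through peripheral components of $U_{k-1},U_{k-2},\dots$ by $f$. The anticipated contradiction combines this with the discreteness of $f$, the escape of the whole configuration ($\operatorname{dist}(0,U_k)\to\infty$ from Lemma~\ref{Lboundednottopconv}), and the identity $f(\partial_{out}U_k)=\partial_{out}U_{k+1}$ of (\ref{th3.1:easythings}): informally, once the component nests and escapes, an extra (peripheral) complementary component cannot be sustained indefinitely. Making this last step precise is the main obstacle --- it is the point at which Kisaka and Shishikura invoke the Riemann--Hurwitz formula, and we intend to replace it by a purely topological argument; once it is in place, $\nu=2$, so $\operatorname{cc}(U_k)=2$ for all large $k$.

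Finally we turn to (\ref{inb1}). I first claim that for all large $k$ the origin lies in a bounded complementary component of $U_k$. Granting this, choose $k'$ beyond that stage and beyond the nesting stage; then for $k\geq k'$ we have $0\in\hull(U_{k-1})$, and $\hull(U_{k-1})$ lies in a bounded complementary component of $U_k$, which must therefore be $\hullo(U_k)$. Hence $f(\hullo(U_k))\supseteq f(\hull(U_{k-1}))=\hull(U_k)\ni 0$ by (\ref{basic1}), and since $f(\hullo(U_k))$ is a bounded complementary component of $U_{k+1}$ by part~(\ref{th3.1:allofboundedcomponent}), it is the one containing $0$, i.e.\ $\hullo(U_{k+1})$. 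For the claim: if $0$ lay in the unbounded complementary component of $U_k$ for infinitely many --- hence, since $\hull(U_k)=f^k(\hull(U_0))$ is eventually increasing, for all large --- $k$, then $\hull(U_0)$ would meet the backward orbit $\bigcup_k f^{-k}(0)$ only finitely often; but $U_0$ is hollow, so $\operatorname{int}\hull(U_0)$ is a nonempty open set meeting $J(f)$, while the backward orbit of a non-exceptional point is dense in the perfect set $J(f)$ by the blow-up property of the Julia set (see \cite{MR3265283}), so $\hull(U_0)$ contains infinitely many preimages of $0$ --- a contradiction. If $0$ itself were exceptional we argue instead with a non-exceptional point, which is permitted by the remark following the definition of $\hullo$.
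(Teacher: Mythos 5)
Your treatment of (\ref{basic1}), (\ref{cc}) and the $\operatorname{cc}(U_0)=\infty$ case via Theorem~\ref{theo:boundedpreimage} matches the paper, and your derivation of (\ref{inb1}) is essentially the same as the paper's once one knows that $0\in\hullo(U_k)$ for large $k$. For that last fact, though, the paper simply reads it off Lemma~\ref{Lboundednottopconv}: for large $k$ one has $\hull(U_j)\subset\hull(U_{j+1})$, so fixing $x_0\in U_{k_0}$ gives $x_0\in\hull(U_k)\setminus U_k$ for $k>k_0$, while $\operatorname{dist}(0,U_k)>|x_0|$ eventually forces $0$ and $x_0$ to lie in the same complementary component of $U_k$; hence $0$ is in a bounded one. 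Your detour through the blow-up property and the exceptional set of $f$ reproves this in a heavier way and adds a side condition you then have to dodge; it is not wrong, but it is a needless complication of something the cited lemma already delivers.

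The genuine gap is the one you flag yourself: you do not complete the proof that $1<\operatorname{cc}(U_0)<\infty$ forces $\operatorname{cc}(U_k)=2$ eventually. You correctly reduce to the situation where, for large $k$, $f$ induces a bijection of bounded complementary components sending $\hullo(U_k)$ to $\hullo(U_{k+1})$ and hence peripheral components to peripheral components, and then you announce that the "anticipated contradiction" via discreteness, escape and $f(\partial_{out}U_k)=\partial_{out}U_{k+1}$ is "the main obstacle". That step is not a technicality: a purely topological argument is unlikely to exist, because the conclusion really does use the capacity-based definition of $J(f)$. The paper's key move is a dichotomy you did not find. Set $V_k=\hull(U_k)\setminus\hullo(U_k)$. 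Either $f(V_k)\not\subset V_{k+1}$ for infinitely many $k$, in which case two distinct bounded complementary components of some $U_k$ have the same image under $f$ (one of them and $\hullo(U_k)$ both map onto $\hullo(U_{k+1})$ by Theorem~\ref{theo:boundedpreimage}(\ref{th3.1:allofboundedcomponent})), so $\operatorname{cc}(U_{k+1})<\operatorname{cc}(U_k)$ infinitely often and $\operatorname{cc}(U_0)=\infty$; or $f(V_k)\subset V_{k+1}$ for all large $k$, in which case, for $x\in V_k$ and a small neighbourhood $U\subset V_k$, one has $f^j(U)\subset V_{k+j}$, so $\bigcup_j f^j(U)$ misses $\hullo(U_k)$, which has nonempty interior and hence positive capacity; by the definition of $J(f)$ this puts $V_k\subset QF(f)$, and since $V_k$ is open, connected and contains the quasi-Fatou component $U_k$, it equals $U_k$, giving $\operatorname{cc}(U_k)=2$. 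Your bijection situation is precisely the second branch, and it is the capacity argument, not a topological or Riemann--Hurwitz one, that closes it.
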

\begin{proof}
Suppose that $f : \R^d \to \R^d$ is a quasiregular map of transcendental type, and that $U_0$ is a bounded, hollow quasi-Fatou component of $f$. Observe throughout this proof, by Lemma~\ref{lemm:1a}, that the hypotheses of Theorem~\ref{theo:boundedpreimage} are satisfied with $U = U_k$ and $U' = U_{k+1}$, for $k\geq 0$. Equation (\ref{basic1}) follows by repeated application of Theorem~\ref{theo:boundedpreimage} part (\ref{th3.1:easythings}). \\ 

Next, it follows from Lemma~\ref{Lboundednottopconv} that we can choose $k'$ sufficiently large that
$$
\{0, f(0)\} \subset \hullo(U_k) \quad\text{and}\quad U_k \subset \hullo(U_{k+1}), \qfor k \geq k'.
$$

Suppose that $k \geq k'$. Then $f(0) \in \hullo(U_{k+1})$ and $0 \in \hullo(U_k)$, the latter implying that also $f(0) \in  f(\hullo(U_k))$. Hence (\ref{inb1}) follows, by Theorem~\ref{theo:boundedpreimage} part (\ref{th3.1:allofboundedcomponent}). \\

Equation (\ref{cc}) is an immediate consequence of Theorem~\ref{theo:boundedpreimage} part (\ref{th3.1:ccdecreases}). \\

Now, let 
$$
V_k = \hull(U_k)\backslash \hullo(U_k), \qfor k\geq k'.
$$
We consider two cases. First, we suppose that there exists $k'' \geq k'$ such that $f(V_k) \subset V_{k+1}$, for $k\geq k''$. We deduce from the definition of $J(f)$ that $V_k \subset QF(f)$, for $k\geq k''$. Since $V_k$ is connected, $U_k \subset V_k$ and $U_k$ is a component of $QF(f)$, we deduce that $V_k = U_k$ and so $\operatorname{cc}(U_k) = 2$. It then follows by Theorem~\ref{theo:boundedpreimage} part (\ref{th3.1:ccdecreases}) that $1 < \operatorname{cc}(U_0) < \infty$.

Suppose, on the other hand, that $f(V_k) \not \subset V_{k+1}$ for infinitely many values of $k$. We complete the proof of the theorem by showing that this implies that $\operatorname{cc}(U_k) = \infty$, for $k \geq 0$. Suppose, by way of contradiction, that there exists $k'' \geq 0$ such that $\operatorname{cc}(U_{k''})$ is finite. It follows by Theorem~\ref{theo:boundedpreimage} part (\ref{th3.1:ccdecreases}) that $\operatorname{cc}(U_k)$ is finite, for $k\geq {k''}$. Note, by Proposition~\ref{prop:BDF}, that
$$
f(V_k) \subset f(\hull(U_k)) \subset \hull(f(U_k)) = \hull(U_{k+1}).
$$

Now, let $k\geq \max\{k',k''\}$ be such that $f(V_k) \not \subset V_{k+1}$. Hence there exists $y \in V_k$ such that $f(y) \in \hullo(U_{k+1})$. It follows that $y$ belongs to a bounded complementary component of $U_k$, say $Y$, with $Y \ne \hullo(U_k)$. We deduce, by Theorem~\ref{theo:boundedpreimage}, that $f(Y) = f(\hullo(U_k))$, and so $\operatorname{cc}(U_{k+1}) < $ $\operatorname{cc}(U_k)$. Since we have assumed that this happens for infinitely many values of $k$, we deduce a contradiction.
\end{proof}
\begin{proof}[Proof of Theorem~\ref{theo:boundedhollowQFsimple}] 
Suppose that $f : \R^d \to \R^d$ is a quasiregular map of transcendental type, and that $U_0$ is a quasi-Fatou component of $f$. 

Suppose that $\operatorname{cc}(U_k)=1$, for some $k\isnatural$. We deduce, by Proposition~\ref{fullequalscc1} and Lemma~\ref{lemfull}, that $\operatorname{cc}(U_k)=1$, for all $k\isnatural$. This deals with the first case.

Suppose next that $U_0$ is unbounded and that $\operatorname{cc}(U_0) \ne 1$, in which case $U_0$ is hollow by Proposition~\ref{fullequalscc1}. Recalling that $J(f)$ is unbounded, Lemma~\ref{Lunboundednottopconv} now implies that $\R^d \setminus U_0$ has infinitely many bounded components. Since $U_0$ is open, it follows that $\operatorname{cc}(U_0)=\infty$. The complete invariance part of Lemma~\ref{Lunboundednottopconv} tells us that $U_0=U_k$, for $k \geq 0$, and thus $\operatorname{cc}(U_0)=\operatorname{cc}(U_k)$.

There remains only the case that $U_0$ is bounded and hollow. It is easy to see that in this case Theorem~\ref{theo:boundedhollowQFsimple} is a consequence of Lemma~\ref{lemm:boundedhollowQF}.
\end{proof}
%
%
%
%
%
%
\section{Proofs of Theorem~\ref{theo:curvefambetterDan} and Theorem~\ref{theo:curvefambetter}}
\label{S:curvefambetterDan}
\begin{proof}[Proof of Theorem~\ref{theo:curvefambetterDan}]
Without loss of generality, it follows by Lemma~\ref{Lboundednottopconv} that we can assume the following. For each $k\geq 0$, the origin is contained in a bounded complementary component of the closure of $U_k$; we label this component $D_k$. Moreover, by Lemma~\ref{lemm:boundedhollowQF}, we can also assume that
\begin{equation}
\label{equation:things2}
f^{k}(\hullo(U_{0})) = \hullo(U_k), \text{ and } f^{k}(\hull(U_{0})) = \hull(U_k), \qfor k\geq 0.
\end{equation}

Suppose that there exist $k\geq 0$ and $x \in \partial D_k$ such that $f(x) \in \operatorname{int }f(D_k)$. Since int$f(D_k) \subset$ int$f(\hullo(U_k))$, it follows that $f(x)$ is in the interior of the complement of $U_{k+1}$. On the other hand, $x \in \partial D_k$ implies that $x \in \partial U_k$, which in turn implies that $f(x) \in \partial U_{k+1}$. This is a contradiction. It follows that, for each $k\geq 0$, $f(\partial D_k) \subset \partial f(D_k)$, and thus $D_k$ is a normal domain for $f$. It can be seen that we may also assume that $f(D_k) = D_{k+1}$, for $k\geq 0$. \\

Let $L > 0$ be large, and take an integer $p \geq 2LK_I(f)$. Since $f$ is of transcendental type, there exists $R>0$ such that $N(f, \{ x : |x| < R \}) \geq p$. By Lemma~\ref{Lboundednottopconv} we can choose $k_0\isnatural$ sufficiently large that, for $k \geq k_0$, we have $\{ x : |x| < R \} \subset D_k$ and hence $N(f, D_k) \geq p$.

It is known that if $D$ is a normal domain of $f$, and $y \in f(D)$, then 
\begin{equation}
\label{eqN2}
N(f, D) = \sum_{x \in D \cap f^{-1}(y)} i(x, f).
\end{equation}
(This follows, for example, from \cite[Lemma 9.15]{MR950174} together with the comments on \cite[p.123]{MR950174}.) We note that $D_0$ is a normal domain for $f^k$. 

Suppose that $k \geq k_0$. We claim that
\begin{equation}
\label{Nclaim}
N(f^k, D_0) = \prod_{i=1}^k N(f, D_{k-i}) \geq p^{k-k_0}.
\end{equation}
We establish this claim as follows. Choose $y \in D_k$ such that
\begin{equation}
\label{ychoice}
B_f \cap \bigcup_{j=1}^k\left(f^{-j}(y) \cap D_{k-j}\right) = \emptyset.
\end{equation}
This choice is possible by Lemma~\ref{lemma:branch}. We then observe, by (\ref{eqN2}) and (\ref{ychoice}), that
$$
\operatorname{card } \left(f^{-1}(y) \cap D_{k-1}\right) = N(f, D_{k-1}),
$$
and then, by induction, that
\begin{equation}
\label{preims}
\operatorname{card } \left(f^{-j}(y) \cap D_{k-j}\right) = \prod_{i=1}^j N(f, D_{k-i}), \qfor 1 \leq j \leq k.
\end{equation}
Our claim then follows from (\ref{eqN2}), (\ref{ychoice}), (\ref{preims}) with $j=k$, and from the choice of $k_0$. \\

We aim next to show that
\begin{equation}
\label{dannew}
m(f^k, \hullo(U_0)) \geq N(f^k, D_0), \qfor k \geq 0.
\end{equation}

To prove this, fix $k\geq 0$ and let $W := \{ x \in \R^d : \operatorname{dist}(x, \hullo(U_k)) < \delta \}$, where we choose $\delta>0$ to be sufficiently small that the component $V$ of $f^{-k}(W)$ that contains $\hullo(U_0)$ meets no other component of $f^{-k}(\hullo(U_k))$. It is straightforward to see that this is possible, by (\ref{equation:things2}) and since there are only finitely many components of $f^{-k}(\hullo(U_k))$ in the bounded set $\hull(U_0)$.

Since $V \subset \hull(U_0)$, we see that $V$ is bounded. Hence, by Theorem~\ref{theo:boundedpreimage} part (\ref{th3.1:easythings}), $V$ is a normal domain for $f^k$. Since $D_0 \subset \hullo(U_0) \subset V$, it follows by (\ref{eqN2}) that, for each $y \in f^k(\hullo(U_0)) = \hullo(U_k)$, we have
$$
\sum_{x \in \hullo(U_0) \cap f^{-k}(y)} i(x, f^k) = \sum_{x \in V \cap f^{-k}(y)} i(x, f^k) = N(f^k, V) \geq N(f^k, D_0).
$$
We obtain (\ref{dannew}) by taking the infimum over all $y \in f^k(\hullo(U_0))$.\\

We deduce from Theorem~\ref{theo:condenser}, together with (\ref{Keq}), (\ref{equation:things2}), (\ref{Nclaim}) and (\ref{dannew}) that, for all sufficiently large values of $k$,
\begin{align}
\operatorname{cap }(\hull(U_k), \hullo(U_k)) &\leq \frac{K_I(f^k)}{m(f^k, \hullo(U_0))} \operatorname{cap }(\hull(U_0), \hullo(U_0)) \nonumber \\
                                      &\leq \frac{K_I(f)^k}{p^{k-k_0}} \operatorname{cap }(\hull(U_0), \hullo(U_0)) \leq \frac{1}{L^{k}}.\label{eq:Mk2}
\end{align}

Next we fix values $a,b,c \in \R^d$ for use in Lemma~\ref{Lemma7.34}. We let $a = 0 \in \hullo(U_k)$, let $b\in \hullo(U_k)$ be such that $$|b| = \sup \{|x| : x \in \hullo(U_k) \} = \sup \{|x| : x \in \partial_{inn} U_k \},$$ and let $c\in \R^d \setminus \hull(U_k)$ be such that $$|c| = \inf \{|x| : x \in \R^d \setminus \hull(U_k) \} = \inf \{|x| : x \in \partial_{out} U_k \}.$$
It follows by Lemma~\ref{Lemma7.34} that
$$
\operatorname{cap }(\hull(U_k), \hullo(U_k)) \geq \tau_d\left(\frac{|c|}{|b|}\right).
$$

We deduce by (\ref{eq:Mk2}) and Lemma~\ref{Theorem7.263} that there are constants $s_d, t_d>0$, such that for all sufficiently large values of $k$,
$$
\frac{1}{L^{k}} \geq \operatorname{cap }(\hull(U_k), \hullo(U_k)) \geq \tau_d\left(\frac{|c|}{|b|}\right) \geq s_d\left(\log\left(t_d\left(\frac{|c|}{|b|}+1\right)\right)\right)^{1-d}.
$$

The theorem follows, by a calculation, since $L$ was arbitrary.
\end{proof}
Theorem~\ref{theo:curvefambetter} is now a relatively straightforward consequence of Theorem~\ref{theo:curvefambetterDan}.
\begin{proof}[Proof of Theorem~\ref{theo:curvefambetter}]
Suppose that $f : \R^d \to \R^d$ is a quasiregular map of transcendental type, and that $U_0$ is a hollow quasi-Fatou component of $f$ with finitely many complementary components. If $U_0$ were unbounded, then, by Lemma~\ref{Lunboundednottopconv}, its complement would be bounded. However, this complement contains the unbounded set $J(f)$. It follows that $U_0$ is bounded. 

Without loss of generality, by Lemma~\ref{Lboundednottopconv}, we can set
$$
R_k := \inf \{ |x| : x \in \partial_{out} U_k \} \quad\text{and}\quad r_k := \sup \{|x| : x \in \partial_{inn} U_k\}, \qfor k\isnatural.
$$

We deduce part (\ref{theointoinf}) of the theorem from Lemma~\ref{Lboundednottopconv}, and part (\ref{theofracsgrow}) of the theorem from Theorem~\ref{theo:curvefambetterDan}. It follows from Theorem~\ref{theo:boundedhollowQFsimple} that, for sufficiently large values of $k$, the domain $U_k$ has exactly two complementary components. Hence, for all large values of $k$, we have that $\{ x : r_k < |x| < R_k \} \subset U_k$, as required. 
\end{proof}
%
%
%
\section{Proof of Theorem~\ref{theo:measureofboundedhollowQF}}
\label{S:measureofboundedhollowQF}
\begin{proof}[Proof of Theorem~\ref{theo:measureofboundedhollowQF}]
First we require some preliminaries. It follows by Lemma~\ref{Lboundednottopconv} that we can choose $k_0 \in \N$ sufficiently large that, for each $k\geq k_0$, the origin is contained in a bounded complementary component of $U_k$, and that $U_k \subset \hullo(U_{k+1})$. We can also assume that (\ref{inb1}) holds for $k\geq k_0$.

Let $W$ be a bounded domain such that $\hullo(U_{k_0})\subset W$ and $\partial W \subset U_{k_0}$; this choice is possible since $\hullo(U_{k_0})$ is a bounded component of the complement of $U_{k_0}$. We claim that, for all $k\geq {k_0}$, the origin is contained in a bounded complementary component of $f^{k-k_0}(\partial W)$. For, by (\ref{inb1}) and our initial assumption, 
$$
0 \in \hullo(U_k) = f^{k-k_0}(\hullo(U_{k_0})) \subset f^{k-k_0}(W), \qfor k\geq {k_0}.
$$

Moreover, by our choice of $W$ and initial assumption, $f^{k-k_0}(\partial W) \subset U_{k}$ and $0 \notin U_{k}$. Hence $0\notin f^{k-k_0}(\partial W)$. It follows that $0$ is contained in a bounded component of the complement of $f^{k-k_0}(\partial W)$. This completes the proof of our claim.

We are now ready to prove the theorem. Fix $v \in U_{k_0}$, and set
$$
L = L(v) = \sup_{x \in \partial W} \mu_{U_{k_0}}(v,x).
$$
Note that the topology given by the metric $\mu_{U_{k_0}}$ is the Euclidean topology on $U_{k_0}$; see \cite[Theorem 1]{MR752460} and also \cite[Theorem 2]{MR0302904}. Hence it follows from the fact that $\partial W$ is a compact subset of $U_{k_0}$ that $L$ is finite.

Suppose that $k\geq {k_0}$. Since the origin is contained in a bounded complementary component of $f^{k-k_0}(\partial W)$, we can choose $x_k \in \partial W$ such that 
$$
| f^{k-k_0}(v) - f^{k-k_0}(x_k) | \geq |f^{k-k_0}(v)|.
$$
For example, we can choose $x_k$ such that the points $f^{k-k_0}(v)$, $0$ and $f^{k-k_0}(x_k)$ are colinear, with $0$ separating the other two points. It follows by (\ref{Keq}), Lemma~\ref{l3} and Lemma~\ref{lemm:boundedhollowQF}, that
\begin{align}
L K_I(f)^{k-k_0} &\geq K_I(f)^{{k-k_0}} \mu_{U_{k_0}}(v,x_k) \nonumber \\
                 &\geq K_I(f^{k-k_0}) \mu_{U_{k_0}}(v,x_k) \nonumber \\
                 &\geq \mu_{f^{k-k_0}({U_{k_0}})}(f^{k-k_0}(v),f^{k-k_0}(x_k)) \nonumber \\
                 &\geq \mu_{U_k}(f^{k-k_0}(v),f^{k-k_0}(x_k)).
\end{align}

We obtain, by Lemma~\ref{Mohri}, that there exists a constant $b_d > 0$ such that
$$
L^{d-1} K_I(f)^{(k-k_0)(d-1)} \geq b_d \frac{|f^{k-k_0}(v) - f^{k-k_0}(x_k)|^d}{\operatorname{meas}(U_k)} \geq b_d \frac{|f^{k-k_0}(v)|^d}{\operatorname{meas}(U_k)}.
$$
It follows that
\begin{equation}
\label{1.6e1}
\operatorname{meas}(U_k) \geq \frac{b_d}{L^{d-1}} K_I(f)^{(k-k_0)(1-d)}|f^{k-k_0}(v)|^d.
\end{equation}

We note, by Lemma~\ref{Lboundednottopconv}, that $v \in A(f)$. It follows, by (\ref{Adef}), that there exists $\ell \isnatural$ such that \begin{equation}
\label{1.6e2}
|f^{k-k_0+\ell}(v)| \geq M^{k}(R, f), \qfor k \geq k_0.
\end{equation}

Equation (\ref{eq:tnewer}) follows from (\ref{1.6e1}) and (\ref{1.6e2}). In addition, we can assume that $R$ is sufficiently large that (\ref{Beq}) holds. Equation (\ref{eq:tnew}) follows.
\end{proof}
%
%
%
%
%
\emph{Acknowledgment:} The authors are grateful to Phil Rippon and Gwyneth Stallard for asking about a generalisation of Theorem C. The authors are also grateful to the referee for many helpful remarks and suggestions.
%
%
%
%
%
%

\end{document}